\journal{Journal of \LaTeX\ Templates}
\newcommand{\Mod}[1]{\ (\mathrm{mod}\ #1)}
\theoremstyle{plain}
\newtheorem{thm}{Theorem}[section]
\newtheorem{cor}[thm]{Corollary}
\newtheorem{lem}[thm]{Lemma}
\newtheorem{prop}[thm]{Proposition}
\newtheorem{conj}[thm]{Conjecture}
\theoremstyle{definition}
\newtheorem{defn}[thm]{Definition}
\begin{document}

\begin{frontmatter}

\title{The relation between Hamiltonian and $1$-tough properties of the Cartesian product graphs}

\author[mymainaddress]{Louis Kao\corref{mycorrespondingauthor}}
\cortext[mycorrespondingauthor]{Corresponding author}
\ead{chihpengkao.am03g@g2.nctu.edu.tw}

\author[mymainaddress]{Chih-wen Weng}

\address[mymainaddress]{Department of Applied Mathematics, National Chiao Tung University,\\ 1001 Ta Hsueh Road, Hsinchu, Taiwan.}

\begin{abstract}
The relation between Hamiltonicity and toughness of a graph is a long standing research problem.
The paper studies the Hamiltonicity of the Cartesian product graph $G_1\square G_2$ of graphs $G_1$ and $G_2$ satisfying that $G_1$ is traceable and $G_2$ is connected with a path factor.
Let $P_n$ be the path of order $n$ and $H$ be a connected bipartite graph. With certain requirements of $n$, we show that the following three statements are equivalent: (i) $P_n\square H$ is Hamiltonian; (ii) $P_n\square H$ is $1$-tough; and
(iii) $H$ has a path factor.
\end{abstract}

\begin{keyword}
Cartesian product graph, Hamiltonian graph, path factor, graph toughness
\MSC[2010] 05C38, 05C42, 05C45, 05C70
\end{keyword}

\end{frontmatter}


\section{Introduction}\label{s1}

A graph is  {\it Hamiltonian} if it contains a spanning cycle, and is {\it traceable} if it contains a spanning path. Determining the Hamiltonicity of a given graph is an old and famous problem.
Here we focus on a family of graphs called Cartesian product graphs.
\begin{defn}
The {\it Cartesian product graph} $G_1\square G_2$ of graphs $G_1$ and $G_2$ is a graph with vertex set
$$V(G_1\square G_2)=\{v_u~|~v\in V(G_1),u\in V(G_2)\},$$
and edge set $E(G_1\square G_2)=$
$$\{v_uv_w~|~v\in V(G_1),uw\in E(G_2)\}\cup\{v_uw_u~|~u\in V(G_2), vw\in E(G_1)\}.$$
\end{defn}

Let $\Delta(G)$ denote the maximum degree of graph $G$ and $|V(G)|$ denote the number of vertices of $G$.   The {\it order} of $G$ is $|V(G)|$. Let $P_n$ denote a path of order $n$. A {\it path factor} of a graph is a spanning subgraph of the graph such that each component of the spanning subgraph is isomorphic to a path with order at least two. If each component in a path factor is isomorphic to $P_2$, the path factor is called a {\it perfect matching}.
We will prove the following theorem.

\begin{thm}\label{main} Let $G_1$ be a traceable graph and $G_2$ a connected graph. Statements (a) and (b) are given as following:
\begin{enumerate}
\item[(a)] $G_2$ has a perfect matching and $|V(G_1)|\geq \Delta(G_2)$.
\item[(b)] $G_2$ has a path factor and $|V(G_1)|$ is an even integer with $|V(G_1)|\geq 4\Delta(G_2)-2$.
\end{enumerate}
If one of (a),(b) holds, then $G_1\square G_2$ has a Hamiltonian cycle.
\end{thm}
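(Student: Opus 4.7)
Since $G_1$ is traceable, fix a Hamiltonian path $v_1 v_2 \cdots v_n$ of $G_1$ (with $n=|V(G_1)|$); it suffices to construct a Hamiltonian cycle in the spanning subgraph $P_n\square G_2$ of $G_1\square G_2$. Regard $P_n\square G_2$ as an $n$-column array whose \emph{row} at $u\in V(G_2)$ is the path $(v_1)_u(v_2)_u\cdots(v_n)_u$. The unified strategy is: (i) decompose $V(P_n\square G_2)$ into a family of vertex-disjoint cycles, one per component of a chosen spanning structure of $G_2$; (ii) merge them into a single cycle by a sequence of local \emph{swaps} that replace a pair of horizontal edges with a pair of vertical edges.

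For case (a), let $M=\{u_iw_i:1\leq i\leq k\}$ be the perfect matching and define the \emph{rectangle cycle} $C_i$ that traverses row $u_i$ left-to-right, crosses via the matching edge at column $n$, returns along row $w_i$, and closes via the matching edge at column $1$. The $C_i$'s partition $V(P_n\square G_2)$. To merge them, fix a spanning tree $T$ of the connected quotient graph $G_2/M$; each of its $k-1$ edges is realised by some $xy\in E(G_2)\setminus M$ joining distinct matching pairs. For each such $xy$, choose an index $c\in\{1,\ldots,n-1\}$ and perform the swap: delete $(v_c)_x(v_{c+1})_x$ and $(v_c)_y(v_{c+1})_y$, insert $(v_c)_x(v_c)_y$ and $(v_{c+1})_x(v_{c+1})_y$. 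Each swap merges the two cycles meeting $xy$, so after $k-1$ swaps a single Hamiltonian cycle remains.

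The combinatorial core is the allocation of column indices. At a fixed $x\in V(G_2)$, the tree edges incident to $x$'s matching pair that use $x$ number at most $\deg_{G_2}(x)-1\leq \Delta(G_2)-1$, and each such swap requires a private column interval in row $x$. Because $C_i$ contains every horizontal edge in its two rows, all $n-1$ intervals of row $x$ are ``available,'' and since $n-1\geq \Delta(G_2)-1$ by hypothesis (a), a greedy assignment (in the spirit of edge colouring) succeeds.

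Case (b) follows the same decompose-and-merge scheme with the path factor $\mathcal{P}=\{Q_1,\ldots,Q_k\}$ replacing $M$ and with swaps selected by a spanning tree of the connected quotient $G_2/\mathcal{P}$. Since $n$ is even, each grid $P_n\square Q_i$ admits a Hamiltonian cycle $C_i$ (e.g.\ a boundary tour with paired interior detours), and the $C_i$'s cover $V(P_n\square G_2)$. The main obstacle, and the source of the strengthened hypothesis $n\geq 4\Delta(G_2)-2$, is that for paths $Q_i$ of order at least $3$ the cycle $C_i$ cannot carry every horizontal edge in every row: rows associated with interior vertices of $Q_i$ retain only about $(n-1)/2$ horizontal edges, so the swap capacity in those rows is roughly halved, and a swap performed at such a row may need to operate on an adjacent \emph{pair} of column intervals rather than a single one. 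A sufficiently explicit, ``swap-friendly'' choice of each $C_i$ must be pinned down so that every vertex $x$ still retains enough available intervals to host its up-to-$\deg_{G_2}(x)-2$ tree-edge swaps; the quantitative accounting of this reduction explains the factor of $4$ in the hypothesis and is the technical heart of the proof.
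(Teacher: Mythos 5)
Your overall strategy coincides with the paper's: decompose $P_n\square G_2$ into one cycle per component of a perfect matching (resp.\ $\{P_2,P_3\}$-factor), then merge the cycles along a tree structure by swapping a pair of horizontal edges for a pair of vertical edges. For case (a) your argument is essentially complete and matches the paper's (the paper organizes the merges as an induction that peels off a leaf component of the factor-quotient tree, maintaining the invariant that row $v$ retains $n-\deg(v)$ horizontal edges; you organize them as a global column assignment). One small point to tighten: your ``greedy assignment'' must properly edge-colour the graph of realised tree edges with $n-1$ colours, and the bound $\Delta-1\le n-1$ only suffices because that realisation graph is a forest (hence Class~1); greedy on an arbitrary conflict graph would need one more colour.

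For case (b), however, what you defer is not a routine verification but the actual content of the proof, and as written there is a genuine gap. First, you never construct the ``swap-friendly'' Hamiltonian cycle of $P_n\square P_3$; the paper must specify it exactly (retaining the horizontal edges of $B_v$ indexed by residues $\{0,1,3\}$, $\{0,2\}$, $\{1,2,3\}$ modulo $4$ in the three rows, giving each vertex a type $L$, $C$, or $R$). Second, and more importantly, your accounting is per-row: you count how many intervals row $x$ retains versus how many swaps it must host. But a swap merging the cycles through $x$ and $y$ needs a \emph{common} column $j$ at which the horizontal edge survives in \emph{both} rows simultaneously, and since different rows retain different residue classes, the binding quantity is the size of the intersection of the two retained sets, not the size of either one. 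The paper's Lemma~\ref{lem4.1} shows these pairwise intersections have size at least $\lceil (n-4)/4\rceil$, and the inequality $\lceil (n-4)/4\rceil-\deg(u_2)+2\ge 1$ is precisely where $n\ge 4\Delta(G_2)-2$ enters. Without the explicit mod-$4$ construction and this intersection bound, the claim that ``the quantitative accounting \ldots explains the factor of $4$'' is a promissory note rather than a proof; your own single-row count (roughly $(n-1)/2$ available intervals against $\deg-2$ swaps) would only suggest a hypothesis near $n\ge 2\Delta$, which is not what is needed.
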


The well-known Petersen's matching theorem \cite{p:91} states that a connected $3$-regular graph with no cut-edges has a perfect matching, so together with Theorem~\ref{main}(a) we obtain the following corollary.

\begin{cor}\label{c1}  Let $G_1$ be a traceable graph of order at least $3$. If $G_2$ is a connected $3$-regular graph with no cut-edge, then $G_1\square G_2$ has a Hamiltonian cycle.\qed
\end{cor}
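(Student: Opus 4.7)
The plan is to derive Corollary~\ref{c1} as an immediate consequence of Theorem~\ref{main}(a), using Petersen's matching theorem to supply the hypothesis about perfect matchings. In other words, I expect no genuine work beyond checking that the hypotheses of condition (a) are met.

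First, I would observe that since $G_2$ is assumed to be a connected $3$-regular graph with no cut-edge, Petersen's matching theorem, cited in the paragraph preceding the corollary, guarantees that $G_2$ has a perfect matching. Next, because $G_2$ is $3$-regular, we have $\Delta(G_2) = 3$. Combining this with the standing assumption $|V(G_1)| \geq 3$, we obtain $|V(G_1)| \geq \Delta(G_2)$.

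Thus both clauses of Theorem~\ref{main}(a) are satisfied: $G_2$ has a perfect matching and $|V(G_1)| \geq \Delta(G_2)$. Since $G_1$ is traceable and $G_2$ is connected by hypothesis, Theorem~\ref{main} directly yields that $G_1 \square G_2$ has a Hamiltonian cycle, which is exactly the conclusion of the corollary.

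The only potential obstacle is purely bookkeeping, namely making sure that the hypothesis $|V(G_1)|\ge 3$ in the corollary matches the requirement $|V(G_1)|\ge\Delta(G_2)=3$ coming from Theorem~\ref{main}(a); no further argument is needed. In particular, there is no nontrivial combinatorial step to carry out here — the work is entirely packaged inside Theorem~\ref{main}(a) and Petersen's theorem, which explains why the statement of the corollary is punctuated with \verb|\qed| in the excerpt.
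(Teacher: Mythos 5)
Your proposal is correct and matches the paper exactly: the authors likewise invoke Petersen's matching theorem to obtain a perfect matching in $G_2$ and then apply Theorem~\ref{main}(a), with the bound $|V(G_1)|\geq 3=\Delta(G_2)$ being the only thing to check. Nothing further is needed.
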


Let $\delta(G)$  denote the minimum degree of graph $G$.  We use  Theorem~{\ref{main}}(b) to obtain the following two Dirac-type results \cite{d52}.

\begin{cor}\label{c2}  Let $G_2$ be a connected graph with $2\delta(G_2)\geq \Delta(G_2)$ and $G_1$ be a traceable graph of even order. If $|V(G_1)|\geq 4\Delta(G_2)-2$, then $G_1\square G_2$ has a Hamiltonian cycle.
\end{cor}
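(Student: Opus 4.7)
The strategy is to reduce the statement to Theorem~\ref{main}(b). The hypotheses on $G_1$ (traceable, even order, $|V(G_1)|\ge 4\Delta(G_2)-2$) and the connectedness of $G_2$ are given directly, so the only remaining ingredient is to show that $G_2$ has a path factor.

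For this I would invoke the classical characterization that a graph $G$ has a path factor if and only if
$$i(G-S)\ \le\ 2|S|\qquad\text{for every }S\subseteq V(G),$$
where $i(\cdot)$ counts isolated vertices. (Any path of order $\ge 2$ decomposes into copies of $P_2$ and $P_3$, so a path factor exists iff a $\{P_2,P_3\}$-factor does, and the latter is governed by the displayed inequality.) The Dirac-type hypothesis $2\delta(G_2)\ge \Delta(G_2)$ makes this condition almost immediate. Fix $S\subseteq V(G_2)$ and let $I$ be the set of vertices isolated in $G_2-S$. Each $v\in I$ has all its neighbors in $S$, so a double count of the edges of $G_2$ between $I$ and $S$ gives
$$|I|\,\delta(G_2)\ \le\ \sum_{v\in I}\deg_{G_2}(v)\ \le\ |S|\,\Delta(G_2)\ \le\ 2|S|\,\delta(G_2).$$
Because $G_2$ is connected on at least two vertices, $\delta(G_2)\ge 1$, and dividing by $\delta(G_2)$ yields $|I|\le 2|S|$, as required. (The case $S=\emptyset$ is trivial since a connected graph on $\ge 2$ vertices has no isolated vertex.)

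With the path factor of $G_2$ in hand, Theorem~\ref{main}(b) applies directly and produces the desired Hamiltonian cycle of $G_1\square G_2$. I do not anticipate a significant obstacle: the whole corollary reduces to the one-line double count above together with the characterization of path factors. The only minor subtlety is the implicit assumption $|V(G_2)|\ge 2$, since a one-vertex graph admits no path factor at all; in that degenerate case $G_1\square G_2=G_1$ is merely traceable and falls outside the intended scope of the corollary.
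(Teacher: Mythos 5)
Your proposal is correct and follows essentially the same route as the paper: a double count of the edges between $S$ and the isolated vertices of $G_2-S$ combined with $2\delta(G_2)\ge\Delta(G_2)$ gives $i(G_2-S)\le 2|S|$, then Proposition~\ref{p3.2} yields a path factor and Theorem~\ref{main}(b) finishes. Your extra remarks about $\delta(G_2)\ge 1$ and the degenerate one-vertex case are harmless refinements of the same argument.
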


\begin{cor}\label{thm0}
Let $G_2$ be a connected graph with $\delta(G_2)\geq |V(G_2)|/3$ and  $G_1$ be a traceable graph of even order. If $|V(G_1)|\geq 4\Delta(G_2)-2$, then $G_1\square G_2$ has a Hamiltonian cycle.
\end{cor}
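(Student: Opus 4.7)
The plan is to reduce everything to Theorem~\ref{main}(b). The hypotheses on $G_1$ are already given ($G_1$ is traceable, $|V(G_1)|$ is even, and $|V(G_1)|\geq 4\Delta(G_2)-2$), and $G_2$ is assumed connected, so the only remaining hypothesis to verify is that $G_2$ has a path factor. Once that is in hand, Theorem~\ref{main}(b) immediately produces a Hamiltonian cycle of $G_1\square G_2$.

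To exhibit a path factor of $G_2$, I would invoke the classical characterization (due to Wang): a graph $G$ has a path factor if and only if $i(G-S)\leq 2|S|$ for every $S\subseteq V(G)$, where $i(\cdot)$ denotes the number of isolated vertices. Write $n=|V(G_2)|$ and let $S\subseteq V(G_2)$ be arbitrary. I would split into two regimes according to whether $|S|<n/3$ or $|S|\geq n/3$. In the first regime, every $v\in V(G_2)\setminus S$ has $\deg_{G_2}(v)\geq n/3>|S|$, so $v$ must have a neighbor outside $S$ and cannot be isolated in $G_2-S$; hence $i(G_2-S)=0\leq 2|S|$. In the second regime, the crude bound $i(G_2-S)\leq n-|S|\leq 2n/3\leq 2|S|$ suffices. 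Either way the required inequality holds, so $G_2$ admits a path factor and the corollary follows from Theorem~\ref{main}(b).

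The only nontrivial step is the path-factor characterization itself: everything else is a direct degree count, and the threshold $n/3$ is tuned exactly so that the two regimes of the case split meet. If the characterization is not already stated earlier as a lemma in the paper, it would need to be cited from the literature or given a self-contained Tutte-style proof; this is where the real content of the corollary sits. I expect no further obstacle, since the remaining work is an elementary inequality chase.
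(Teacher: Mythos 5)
Your proposal is correct and takes essentially the same route as the paper: the paper also reduces to Theorem~\ref{main}(b) after extracting a path factor of $G_2$ from the characterization $i(G-S)\leq 2|S|$ (stated as Proposition~\ref{p3.2}, cited to Akiyama--Avis--Era rather than Wang), packaged as Lemma~\ref{l2.3}. The only cosmetic difference is that you verify the inequality directly via a two-case split on $|S|$ versus $|V(G_2)|/3$, while the paper argues by contradiction from a violating set $S$.
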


For $S\subseteq V(G)$ let $G-S$ denote the subgraph of $G$ induced on $V(G)-S$. To discuss the Hamiltonicity of graphs, another measure of graphs is usually considered. A graph $G$ is {\it $t$-tough} if $t$ is a rational number such that  $|S|\geq t\cdot c(G-S)$ for
any {\it cut set} $S$ of $G$,  i.e. $S\subseteq V(G)$ such that $G-S$ has $c(G-S)$ components with $c(G-S)\geq 2$. If G is not complete, the largest $t$ makes $G$ to be $t$-tough is called the {\it toughness} of $G$, denoted by
$t(G)$. For convenience, we set $t(K_n) =+\infty$, where $K_n$ is the complete graph of order $n$.

Toughness is a non-decreasing (with respect to the number of edges) graph property. Therefore, a Hamiltonian graph is $1$-tough since it contains a spanning cycle which is $1$-tough. However, not all $1$-tough graphs are Hamiltonian. Figure~1 gives a $1$-tough non-Hamiltonian graph of order $7$.

\bigskip
\begin{center}
\begin{tikzpicture}[line cap=round,line join=round,>=triangle 45,x=0.5cm,y=0.5cm]

\clip(-15.5,5.5) rectangle (-8.5,14.5);
\draw [line width=2.pt] (-15.,12.)-- (-9.,12.);
\draw [line width=2.pt] (-12.,14.)-- (-15.,12.);
\draw [line width=2.pt] (-12.,14.)-- (-9.,12.);
\draw [line width=2.pt] (-9.,12.)-- (-9.,8.);
\draw [line width=2.pt] (-9.,8.)-- (-12.,6.);
\draw [line width=2.pt] (-12.,6.)-- (-15.,8.);
\draw [line width=2.pt] (-15.,12.)-- (-15.,8.);
\draw [line width=2.pt] (-12.,14.)-- (-12.,10.);
\draw [line width=2.pt] (-12.,10.)-- (-12.,6.);
\begin{scriptsize}
\draw [fill=white] (-12.,14.) circle (3.5pt);
\draw [fill=white] (-15.,12.) circle (3.5pt);
\draw [fill=white] (-15.,8.) circle (3.5pt);
\draw [fill=white] (-12.,6.) circle (3.5pt);
\draw [fill=white] (-9.,12.) circle (3.5pt);
\draw [fill=white] (-9.,8.) circle (3.5pt);
\draw [fill=white] (-12.,10.) circle (3.5pt);
\end{scriptsize}
\end{tikzpicture}
\medskip

{\bf Figure 1.} A $1$-tough non-Hamiltonian graph with $7$ vertices
\end{center}
\bigskip

The idea of graph toughness was first introduced by V. Chv\'{a}tal in his 1973's seminal paper \cite{c:73}. He conjectured that there exists a real number $t_0$ such that all $t_0$-tough graphs are Hamiltonian. However, this conjecture is still open. From papers \cite{ho:95} and \cite{bbv:00}, there are examples of non-Hamiltonian graphs with toughness greater than $1.25$ and $2$, respectively. On the other hand, for specific graph classes, there may exist a toughness bound to ensure the Hamiltonicity. For instance, \cite{kk:17} shows that every $10$-tough chordal graphs are Hamiltonian.

Chv\'{a}tal's Conjecture holds  trivially  for bipartite graphs by choosing $t_0=1+\epsilon$ for any $\epsilon >0$
since a bipartite graph has toughness at most $1$.  Hence the Hamiltonicity of a $1$-tough bipartite graph deserves a further study. We apply Theorem~\ref{main} on a special family of bipartite graphs and obtain the following two corollaries.

\begin{cor}\label{thm2}
Let $T$ be a tree with a perfect matching and $n$ be a positive integer. The following three statements are equivalent:
\begin{enumerate}
\item[(1)] $P_n\square T$ is Hamiltonian.
\item[(2)] $P_n\square T$ is $1$-tough.
\item[(3)] $n\geq \Delta(T)$.
\end{enumerate}
\end{cor}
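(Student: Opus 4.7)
The plan is to prove the three-way equivalence by a cycle of implications $(1)\Rightarrow(2)\Rightarrow(3)\Rightarrow(1)$, where two of the three implications are essentially for free and all the real content sits in $(2)\Rightarrow(3)$.

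The implication $(1)\Rightarrow(2)$ is the standard observation already recorded in Section~\ref{s1}: a Hamiltonian graph contains a spanning cycle, which is itself $1$-tough, and toughness is monotone in edges. For $(3)\Rightarrow(1)$, the intended move is to invoke Theorem~\ref{main}(a) with $G_1=P_n$ and $G_2=T$: the path $P_n$ is traceable, the tree $T$ is connected with a perfect matching by hypothesis, and $|V(P_n)|=n\geq \Delta(T)$ by $(3)$, so the hypothesis of Theorem~\ref{main}(a) is met and $P_n\square T$ is Hamiltonian.

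The heart of the argument is the contrapositive of $(2)\Rightarrow(3)$: assuming $n<\Delta(T)$, I would construct a cut set $S$ with $|S|<c((P_n\square T)-S)$. The natural choice is to fix a vertex $v\in V(T)$ with $\deg_T(v)=\Delta(T)=:d$ and take the ``column'' $S=\{v_u : u\in V(P_n)\}$, so that $|S|=n$. Because $T$ is a tree, distinct neighbors of $v$ lie in distinct components of $T-v$, and so $T-v$ has exactly $d$ components $C_1,\ldots,C_d$; each gives rise to a connected subgraph $P_n\square C_i$ in $(P_n\square T)-S$, and no edges run between these subgraphs once $S$ has been deleted. Therefore $c((P_n\square T)-S)\geq d>n=|S|$, so $P_n\square T$ fails to be $1$-tough.

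No serious obstacle is expected. The one point that deserves a quick sanity check is that $S$ is a genuine cut set in the sense used in the paper, which requires $c((P_n\square T)-S)\geq 2$; this is immediate since $d=\Delta(T)>n\geq 1$ forces $d\geq 2$. It is worth noting that the perfect-matching hypothesis on $T$ is used only in $(3)\Rightarrow(1)$, while the direction $(2)\Rightarrow(3)$ works for any tree $T$ and in fact only uses that $T$ has no cycles so that removing a vertex of degree $d$ produces exactly $d$ components.
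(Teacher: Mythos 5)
Your proposal is correct and matches the paper's proof: the paper handles $(1)\Rightarrow(2)$ as the standard monotonicity observation, $(3)\Rightarrow(1)$ by Theorem~\ref{main}(a), and $(2)\Rightarrow(3)$ by citing Lemma~\ref{1.2}, whose proof is exactly your column cut set $S=\{v_u : u\in V(P_n)\}$ yielding $\Delta(T)>n=|S|$ components. The only difference is presentational: you inline the argument that the paper has factored out into Lemma~\ref{1.2}.
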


\begin{cor}\label{thm1}
Let $H$ be a connected bipartite graph. Let $n$ be an even integer and  $n\geq 4\Delta(H)-2 $. The following three statements are equivalent:
\begin{enumerate}
\item[(1)] $P_n\square H$ is Hamiltonian.
\item[(2)] $P_n\square H$ is $1$-tough.
\item[(3)] $H$ has a path factor.
\end{enumerate}
\end{cor}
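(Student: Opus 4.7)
The plan is to prove the equivalence as the cycle $(3)\Rightarrow(1)\Rightarrow(2)\Rightarrow(3)$. The first two implications are essentially free: $(3)\Rightarrow(1)$ is a direct application of Theorem~\ref{main}(b) with $G_1=P_n$ (trivially traceable) and $G_2=H$, since $n$ is even, $n\geq 4\Delta(H)-2$, and $H$ has a path factor; and $(1)\Rightarrow(2)$ follows from the standard fact that any Hamiltonian cycle is $1$-tough, combined with the monotonicity of toughness under adding edges.

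The heart of the argument is $(2)\Rightarrow(3)$, which I would prove by contraposition: assuming $H$ has no path factor, I exhibit $T\subseteq V(P_n\square H)$ with $c((P_n\square H)-T)>|T|$. The starting input is the classical characterization that $H$ has no $\{P_2,P_3\}$-factor---equivalently, no path factor---if and only if some $S'\subseteq V(H)$ satisfies $i(H-S')>2|S'|$, where $i(\cdot)$ counts isolated vertices. I would first normalize $S'$ against the bipartition $(A,B)$ of $H$: each isolated vertex of $H-S'$ in $A$ has all its $H$-neighbours in $S'\cap B$ and vice versa, so a two-case pigeonhole on $i(H-S')>2|S'|$ yields (after possibly interchanging $A$ and $B$) a set $S:=S'\cap A\subseteq A$ whose isolated set $X$ in $H-S$ lies entirely in $B$ and satisfies $k:=|X|>2s:=2|S|$.

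The cut is built from one colour class of the natural bipartition of $P_n\square H$ with a single end-swap at position $i=n$. Writing
\[
C_1=\{i_v:v\in A,\ i\text{ odd}\}\cup\{i_v:v\in B,\ i\text{ even}\},
\qquad
C_0=V(P_n\square H)\setminus C_1,
\]
I set
\[
T=\bigl(C_1\setminus\{n_x:x\in X\}\bigr)\cup\{n_s:s\in S\}.
\]
Since $n$ is even, $|T|=\tfrac{n|V(H)|}{2}+s-k$. The central claim is that $(P_n\square H)-T$ decomposes into $\tfrac{n|V(H)|}{2}-s-k$ isolated vertices in $C_0$ together with $k$ disjoint $P_2$ components $(n-1)_x\,n_x$, $x\in X$; hence $c((P_n\square H)-T)=\tfrac{n|V(H)|}{2}-s$, and so $c-|T|=k-2s\geq 1$, contradicting $1$-toughness.

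The one genuine obstacle is that final component count. After the swap one must verify that (i)~every remaining vertex of $C_0$ outside the pairs $(n-1)_x\,n_x$ is isolated, and (ii)~each such pair is itself a component, with no surviving neighbour elsewhere. Both checks rely on $N_H(x)\subseteq S\subseteq A$ (forced by $x$ being isolated in $H-S$) together with the independence of $C_0$ in the bipartite graph $P_n\square H$; with these, the swap destroys exactly the $s$ isolated $C_0$-components $\{n_s:s\in S\}$ and creates exactly $k$ new $P_2$ components at $\{(n-1)_x\,n_x:x\in X\}$, producing the decisive surplus $k-2s\geq 1$.
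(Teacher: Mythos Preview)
Your proof is correct and follows essentially the same route as the paper. The implications $(3)\Rightarrow(1)$ and $(1)\Rightarrow(2)$ are handled identically, and your contrapositive argument for $(2)\Rightarrow(3)$ reproduces the paper's Theorem~\ref{2.4}: both reduce via Proposition~\ref{p3.2} to a set $S$ contained in a single partite set with $i(H-S)>2|S|$, then cut $P_n\square H$ with one colour class after a local swap at an endpoint of $P_n$ (the paper swaps at position $1$, you at position $n$, and you take the opposite colour class, but the arithmetic is the same). One small point you use without stating it: the claim that the isolated set $X$ of $H-S$ lies entirely in $B$ relies on $H$ being connected with at least two vertices, since a vertex of $A\setminus S$ retains all its neighbours in $H-S$; this is available from the hypotheses of the corollary.
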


The paper is organized as follows. In Section~\ref{s3}, we use a known characterization of a graph with a path factor to study the toughness of $P_n\square H$, where $H$ is a bipartite graph. We will prove Theorem~\ref{main}(a) in Section~\ref{s_4}; and prove Theorem~\ref{main}(b) in Section~\ref{s4}. In Section~\ref{s2}, we complete the proofs of  Corollary~\ref{c2}, Corollary~\ref{thm0}, Corollary~\ref{thm2} and Corollary~\ref{thm1}. To show the assumption
$|V(G_1)|\geq 4\Delta(G_2)-2$ in Theorem~\ref{main} {\it (b)} can not be extended to $|V(G_1)|\geq \Delta(G_2)$ as suggested by Corollary~\ref{thm2}, we give an example of $1$-tough non-Hamiltonian graph $P_4\square T$ for a particular tree $T$ that has a path factor and $\Delta(T)=3$  in Section~\ref{s2}. Finally, two conjectures will be given in Section~\ref{concluding}.

\section{Path factor of a bipartite graph}\label{s3}

To introduce properties of a graph with a path factor, we need more notations. First, we say a graph to have a $\{P_2,P_3\}-$factor if it has a spanning subgraph such that each component is isomorphic to $P_2$ or $P_3$. Next, we use $i(G)$ to denote the number of isolated vertices of $G$.

A $\{P_2,P_3\}$-factor is a path factor, and a path with order at least $2$ has a $\{P_2,P_3\}$-factor. Therefore, the following lemma follows.

\begin{lem}\label{p3.1}
A graph $G$ has a path factor if and only if $G$ has a $\{P_2,P_3\}$-factor. \qed
\end{lem}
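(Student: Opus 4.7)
The equivalence has a trivial direction and a routine direction. The plan is to handle them separately.

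For the $(\Leftarrow)$ direction, observe that a $\{P_2,P_3\}$-factor is by definition a spanning subgraph whose components are paths of order $2$ or $3$, and in particular of order at least $2$, which exactly matches the definition of a path factor. So nothing needs to be done here.

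For the $(\Rightarrow)$ direction, suppose $G$ has a path factor $F$ whose components are paths $Q_1,Q_2,\ldots,Q_r$, each of order at least $2$. I would reduce the problem to a single component, i.e., show that for every $k\geq 2$ the path $P_k$ itself admits a $\{P_2,P_3\}$-factor. Once that is established, taking the union over the components $Q_1,\ldots,Q_r$ of $F$ produces a $\{P_2,P_3\}$-factor of $G$, since $F$ is already spanning.

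The decomposition of $P_k$ rests on the elementary number-theoretic fact that every integer $k\geq 2$ can be written as $k=2a+3b$ with nonnegative integers $a,b$ (one may check $k=2,3,4,5$ by hand and then induct on $k$ by subtracting $2$). Labeling the vertices of $P_k$ consecutively as $v_1,v_2,\ldots,v_k$, I would cut the path into $a$ consecutive blocks of length $2$ (each yielding a $P_2$) followed by $b$ consecutive blocks of length $3$ (each yielding a $P_3$), by deleting the edges between blocks; the remaining components form the desired $\{P_2,P_3\}$-factor of $P_k$.

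There is no real obstacle: the only thing to verify is the $2a+3b$ representation, and the rest is bookkeeping. The statement is essentially a definitional unpacking together with the fact that $\{2,3\}$ generates every integer $\geq 2$ as a nonnegative combination.
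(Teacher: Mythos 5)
Your proof is correct and follows essentially the same route as the paper, which justifies the lemma by the two observations that a $\{P_2,P_3\}$-factor is a path factor and that every path of order at least $2$ has a $\{P_2,P_3\}$-factor; you merely spell out the latter via the representation $k=2a+3b$. Nothing further is needed.
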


The proposition below is from \cite{aae:80}.

\begin{prop}[\cite{aae:80}]\label{p3.2}
A graph $G$ has a path factor if and only if $i(G-S)\leq 2|S|$ for all $S\subseteq V(G).$ \qed
\end{prop}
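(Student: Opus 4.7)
The plan is to use Lemma~\ref{p3.1} to replace "path factor" by "$\{P_2,P_3\}$-factor" on both sides of the biconditional, then prove necessity and sufficiency separately. For $S\subseteq V(G)$ I write $I(S)$ for the set of isolated vertices of $G-S$, so that $i(G-S)=|I(S)|$.

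For necessity, I would fix a $\{P_2,P_3\}$-factor $F$ and an arbitrary $S\subseteq V(G)$ and argue componentwise on $F$. Two $F$-adjacent vertices are $G$-adjacent and hence cannot both lie in $I(S)$; conversely, a vertex of $I(S)$ forces each of its $F$-neighbours into $S$. A short case check on $C\cong P_2$ and $C\cong P_3$ then yields the inequality $|V(C)\cap I(S)|\leq 2\,|V(C)\cap S|$ for every component $C$ of $F$, the only tight case being a $P_3$ whose two leaves lie in $I(S)$ and whose middle vertex lies in $S$. Summing over the components of $F$ gives $i(G-S)\leq 2|S|$.

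For sufficiency I would argue the contrapositive by an exchange argument. Among all spanning subgraphs of $G$ whose non-trivial components are copies of $P_2$ and $P_3$, pick a maximum one $F$ (maximising the number of covered vertices), breaking ties so that simple enlargement operations are blocked (e.g.\ maximising the number of $P_3$-components). Let $U$ be the uncovered vertex set and assume $U\neq\emptyset$. The maximality of $F$ forces strong structural constraints on $G$-neighbourhoods of $U$: no $u\in U$ has a neighbour in $U$ (else one creates a new $P_2$), no $u\in U$ has a neighbour at a leaf of a $P_2$ of $F$ (else that $P_2$ extends to a $P_3$), and analogous swap arguments along the $P_3$-components of $F$ rule out further configurations. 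These constraints let one grow a set $S$ of interior $P_3$-vertices of $F$ reachable from $U$ by alternating moves; one then checks that $U\subseteq I(S)$ and that each vertex added to $S$ is charged with at most two vertices of $I(S)$, giving the strict inequality $i(G-S)>2|S|$ and contradicting the hypothesis.

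The main obstacle is precisely this sufficiency step: making the exchange argument precise enough to produce the factor $2$ (as opposed to the factor $1$ in Tutte's perfect-matching theorem) demands a careful choice of the tie-breaking rule and of the set $S$, and the bookkeeping of which leaves become isolated in $G-S$ is delicate. A cleaner alternative would be to encode $\{P_2,P_3\}$-factor existence as a $(g,f)$-factor or matching problem on an auxiliary graph obtained by attaching small gadgets to each vertex of $G$, apply Lov\'asz's $(g,f)$-factor theorem or the Tutte-Berge formula on that auxiliary graph, and then translate the resulting deficiency set back to the required $S\subseteq V(G)$.
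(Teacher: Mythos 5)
The paper offers no proof of this proposition: it is quoted directly from Akiyama--Avis--Era \cite{aae:80} (note the \qed attached to the statement), so there is no internal argument to compare yours against. Judged on its own, your necessity direction is complete and correct. After invoking Lemma~\ref{p3.1}, the componentwise bound $|V(C)\cap I(S)|\le 2\,|V(C)\cap S|$ for each component $C$ of a $\{P_2,P_3\}$-factor is easy to verify (an isolated vertex of $G-S$ has all its $F$-neighbours in $S$), and summing over the components of a spanning vertex-disjoint family gives $i(G-S)\le 2|S|$; your identification of the tight configuration --- a $P_3$ with both leaves in $I(S)$ and its centre in $S$ --- is exactly right.

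The sufficiency direction, however, is a plan rather than a proof, and the gap you flag yourself is genuine. The maximality constraints you list are correct as far as they go: no edge inside the uncovered set $U$, no edge from $U$ to a $P_2$-component, and no edge from $U$ to a leaf of a $P_3$-component (replacing $abc$ by the edges $ua$ and $bc$ covers one more vertex), so every neighbour of an uncovered vertex must be the centre of a $P_3$-component. But the decisive step --- constructing $S$ from such centres and certifying $i(G-S)>2|S|$ --- is precisely where all the work lies and is not supplied. A single uncovered vertex may be adjacent to several centres, so no one centre isolates it; one must grow $S$ iteratively, decide which leaves of the affected $P_3$'s also become isolated (they need not be), and pair this with a secondary extremal choice of $F$ to make the charging come out to strictly more than two isolated vertices per element of $S$. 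None of that bookkeeping is done, so the contrapositive is not established. The alternative you mention --- encoding the $\{P_2,P_3\}$-factor as a $\{1,2\}$-factor and applying the $(g,f)$-factor theorem with $g\equiv 1$, $f\equiv 2$, whose deficiency condition is exactly $i(G-S)\le 2|S|$ --- is the standard route and would close the argument in a few lines, but as written neither route is completed. Since the paper itself treats the proposition as a known citation, the economical fix is simply to do the same.
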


\begin{lem}\label{l2.3}
Let $G$ be a graph. If $\delta(G)\geq |V(G)|/3$, then $G$ has a path factor.
\end{lem}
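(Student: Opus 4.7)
The plan is to invoke Proposition~\ref{p3.2}, which reduces the claim to verifying that $i(G-S)\leq 2|S|$ for every $S\subseteq V(G)$. So I would fix an arbitrary $S\subseteq V(G)$, write $n=|V(G)|$, and let $I$ denote the set of isolated vertices of $G-S$; the goal becomes $|I|\leq 2|S|$.

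The key observation is that each vertex $v\in I$ has all of its $G$-neighbors inside $S$ (otherwise it would not be isolated in $G-S$), so $|S|\geq \deg_G(v)\geq\delta(G)\geq n/3$ as long as $I$ is nonempty. On the other hand, the trivial disjointness $S\cap I=\emptyset$ yields $|S|+|I|\leq n$, hence $|I|\leq n-|S|$. Combining these two bounds gives $|I|\leq n-|S|\leq 3|S|-|S|=2|S|$, which is exactly what Proposition~\ref{p3.2} needs.

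There is no real obstacle here; the entire argument is a double counting of the vertex set using the minimum-degree hypothesis, and it goes through cleanly once one notices that isolated vertices of $G-S$ have all their neighbors trapped in $S$. The only corner cases worth a line are $I=\emptyset$ (where the conclusion is immediate and the lower bound on $|S|$ is not available) and very small orders of $G$, which are vacuous or trivial because $\delta(G)\geq n/3$ already forces $G$ to have no isolated vertex.
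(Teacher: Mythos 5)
Your proof is correct and is essentially the paper's argument: both reduce to Proposition~\ref{p3.2}, observe that an isolated vertex of $G-S$ has all its neighbours in $S$ (so $\delta(G)\leq|S|$), and combine this with the disjointness of $S$ and $I$ inside $V(G)$. The paper merely phrases the same counting as a proof by contradiction, while you run it directly; there is no substantive difference.
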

\begin{proof} Suppose $G$ has no path factor.  Choose $S\subseteq V(G)$ with $|I|=i(G-S)> 2|S|$ by Proposition~\ref{p3.2},
where $I$ is the set of isolated vertices in $G-S$.
As each vertex in $I$ has degree at most $|S|$ in $G$, we have
$|S|<(|S|+|I|)/3\leq |V(G)|/3$, a contradiction to the assumption that  $\delta(G)\geq |V(G)|/3$.
\end{proof}

Restricted to bipartite graphs, the following is a supplementary of Proposition~\ref{p3.2}.

\begin{prop}\label{p3.3}
If $H$ is a bipartite graph that does not contain a path factor, then there exists a vertex subset $S$ that belongs to a single partite set of $H$ with $i(H-S)>2|S|$.
\end{prop}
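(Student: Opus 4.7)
The plan is to start from the set $S$ guaranteed by Proposition~\ref{p3.2} (since $H$ has no path factor), and then reduce it to a subset of one partite class while only improving the ratio $i(H-S)/|S|$.

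Let the bipartition of $H$ be $V(H)=A\cup B$. By Proposition~\ref{p3.2}, pick $S\subseteq V(H)$ with $i(H-S)>2|S|$, and let $I$ be the set of isolated vertices of $H-S$. Split the two sets along the bipartition: write $S=S_A\cup S_B$ with $S_A=S\cap A$ and $S_B=S\cap B$, and similarly $I=I_A\cup I_B$. Setting $s_A=|S_A|$, $s_B=|S_B|$, $i_A=|I_A|$, $i_B=|I_B|$, the hypothesis becomes $i_A+i_B>2s_A+2s_B$. Hence at least one of the inequalities $i_A>2s_B$ or $i_B>2s_A$ must hold; by symmetry assume $i_A>2s_B$.

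The key observation is that bipartiteness matches each $I_A$-vertex only with vertices of $B$: since a vertex $v\in I_A\subseteq A$ is isolated in $H-S$, all of its neighbors in $H$ lie in $S$, and as $H$ is bipartite they actually lie in $S\cap B=S_B$. Consequently, deleting only $S_B$ already leaves every vertex of $I_A$ without a neighbor, so each vertex of $I_A$ is isolated in $H-S_B$. Therefore
\[
i(H-S_B)\geq i_A>2s_B=2|S_B|,
\]
and $S_B$ lies entirely in the partite set $B$, which is the desired conclusion.

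I do not foresee a real obstacle here; the only thing to be careful about is the bookkeeping when $S_B=\emptyset$ (then $i_A>0$, so $H$ itself has an isolated vertex, and $S_B=\emptyset$ is trivially contained in a single partite set), and the symmetric case where instead $i_B>2s_A$ yields the analogous conclusion with $S_A\subseteq A$. The heart of the argument is the one-line use of bipartiteness to replace $S$ by $S_B$ (or $S_A$) without losing any of the isolated vertices on the opposite side.
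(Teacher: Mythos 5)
Your proof is correct and follows essentially the same route as the paper's: split $S$ along the bipartition, observe that bipartiteness forces every isolated vertex of $H-S$ lying in $A$ to have all its neighbours in $S_B$ (and symmetrically), and finish with the averaging/pigeonhole step. The only cosmetic difference is that you phrase the pigeonhole in terms of $i_A,i_B$ before translating to $i(H-S_A),i(H-S_B)$, whereas the paper works with the latter quantities directly.
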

\begin{proof}
By Proposition~\ref{p3.2} there exists $S'\subseteq V(H)$ such that $i(H-S')>2|S'|$. Let $H$ have partite sets $A, B$ and $S_A:=S'\cap A, S_B:=S'\cap B$. Note that an isolated vertex in $H-S'$ is either an isolated vertex in $H-S_A$ or an isolated vertex in $H-S_B$. So $i(H-S_A)+i(H-S_B)=i(H-S')>2|S'|=2|S_A|+2|S_B|$ which implies $i(H-S_A)>2|S_A|$ or $i(H-S_B)>2|S_B|$.
\end{proof}

For convenience, assume
$$V(P_n)=\{1,2,\ldots,n\},E(P_n)=\{i(i+1):i=1,2,\ldots,n-1\}$$
in the rest part of this paper.

\begin{thm}\label{2.4}
If $H$ be a bipartite graph without path factors, then the Cartesian product $P_n\square H$ is not $1$-tough.
\end{thm}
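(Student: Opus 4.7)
The plan is to use Proposition~\ref{p3.3} to produce a small vertex set whose removal from $P_n\square H$ leaves strictly more components than its cardinality, thereby witnessing the failure of $1$-toughness. First, since $H$ is bipartite and has no path factor, Proposition~\ref{p3.3} yields a subset $S$ contained in one partite set of $H$ (WLOG $S\subseteq A$) with $i(H-S)>2|S|$. Write $I$ for the set of isolated vertices of $H-S$; then $|I|>2|S|$ and every $u\in I$ satisfies $N_H(u)\subseteq S$. Geometrically, in $P_n\square H$ the columns $\{v_u:v\in V(P_n)\}$ for $u\in I$ attach to the rest of the graph only through the columns indexed by $S$.

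Next, I would define the cut by combining a ``horizontal slice'' on $S$ and a ``vertical slice'' on $I$:
\[S^* \;=\; (V^+\times S)\cup(V^-\times I),\]
where $V^+,V^-\subseteq V(P_n)$ are disjoint, $V^+$ is independent in $P_n$, and every $P_n$-neighbour of $V^+$ lies in $V^-\cup\{0,n+1\}$. The natural alternating choice $V^+=\{1,3,5,\ldots\}$ and $V^-=\{2,4,6,\ldots\}$ satisfies this. The key observation is that for every $v\in V^+$ and $u\in I$, the vertex $v_u$ becomes isolated in $(P_n\square H)-S^*$: its horizontal neighbours $v_w$ (with $w\in N_H(u)\subseteq S$) lie in $V^+\times S\subseteq S^*$, while its vertical neighbours $(v\pm 1)_u$ lie in $V^-\times I\subseteq S^*$ or fall outside $V(P_n)$. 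Symmetrically, for $v\in V^-$ and $u\in S$ with $N_H(u)\subseteq I$, the vertex $v_u$ is isolated.

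Finally, one has $|S^*|=|V^+|\cdot|S|+|V^-|\cdot|I|$, and the count of components satisfies
\[c((P_n\square H)-S^*) \;\geq\; |V^+|\cdot|I| + |V^-|\cdot|S_0|,\]
where $S_0=\{u\in S:N_H(u)\subseteq I\}$. The required inequality $c>|S^*|$ rearranges to $(|V^+|-|V^-|)(|I|-|S|)>|V^-|\cdot(|S|-|S_0|)$, which I would verify by combining $|I|>2|S|$ with a minimality argument reducing to the case $S_0=S$ (so the right-hand side vanishes). The hardest step is precisely this reduction: one must show that $S$ can be chosen so that every $u\in S$ has $N_H(u)\subseteq I$, while retaining $i(H-S)>2|S|$; I would argue this by starting with $S$ from Proposition~\ref{p3.3} and repeatedly discarding any $u\in S$ with a neighbour outside $I$, tracking how $i(H-S)$ and $2|S|$ change at each step. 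A further delicate point is the parity of $n$: when $n$ is even the alternating pattern gives $|V^+|=|V^-|$, so the ``extra'' component must be harvested from a boundary row by shifting $V^+$ (or $V^-$) to include an endpoint of $P_n$, exploiting the fact that an endpoint has only one $P_n$-neighbour to block.
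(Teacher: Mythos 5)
Your construction hinges on the reduction to $S_0=S$ (every $u\in S$ has $N_H(u)\subseteq I$), and that reduction is genuinely false: the discarding process you sketch cannot work, because deleting a vertex $u\in S$ with a neighbour outside $I$ removes all of $N_H(u)\cap I$ (up to $\Delta(H)$ vertices) from the isolated set while relaxing the target by only $2$. A concrete counterexample: let $H$ be the tree with centre $c$ adjacent to three leaves $l_1,l_2,l_3$ and to the endpoint $p_1$ of a path $p_1p_2p_3p_4p_5$. This $H$ is bipartite with no path factor, and a short case check shows the \emph{only} subset of a partite class with $i(H-S)>2|S|$ is $S=\{c\}$; but $p_1\in N_H(c)\setminus I$, so $S_0=\emptyset$, and discarding $c$ collapses $i(H-S)$ to $0$. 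Without $S_0=S$ your cut provably fails as a witness: for this $H$, odd $n$, and the alternating choice of $V^\pm$, one computes $|S^*|=2n-1$ while $c\bigl((P_n\square H)-S^*\bigr)=\tfrac{3(n+1)}{2}+1=\tfrac{3n+5}{2}$, so $c-|S^*|=\tfrac{7-n}{2}\leq 0$ once $n\geq 7$; the deficit $|V^-|\,(|S|-|S_0|)$ grows linearly in $n$ while your surplus $|I|-|S|$ is a constant, and the single extra "big" component cannot close that gap. The even-$n$ patch is also not salvageable as stated: for $H=K_{1,3}$ with even $n$ your set gives exactly $c\bigl((P_n\square H)-S^*\bigr)=2n=|S^*|$ (equality, no violation), and shifting $V^+$ to grab an endpoint of $P_n$ destroys either the independence of $V^+$ or the containment $N_{P_n}(V^+)\subseteq V^-$. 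Note the even case cannot be waved away, since Corollary~\ref{thm1} applies the theorem precisely with $n$ even.

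The structural reason your cut runs out of room is that you pay $|V^-|\cdot|I|$ deletions to create only $|V^+|\cdot|I|$ isolated vertices, an essentially break-even trade that leaves nothing to cover the cost $|V^+|\cdot|S|$ unless the $S$-columns also shatter, i.e.\ unless $S_0=S$. The paper avoids this entirely: since $P_n\square H$ is bipartite with sides $X,Y$ ($|X|\leq|Y|$), deleting all of $X$ already isolates \emph{every} surviving vertex, so $|X|<|Y|$ finishes immediately; in the balanced case $|X|=|Y|$ one swaps the row-$1$ sets, taking $X'=(X\cup 1_S)-1_I$, after which the only surviving edges are the pairs $1_u2_u$ with $u\in I$, and the single local imbalance $|1_I|>2|1_S|$ yields $c\bigl((P_n\square H)-X'\bigr)\geq |Y|-|1_S|>|X'|$. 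That argument needs no condition on $N_H(S)$, no parity split, and works uniformly in $n$; if you want to repair your proof, replacing your two-slice cut by this bipartition-plus-swap cut is the essential missing idea.
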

\begin{proof}
By Proposition~\ref{p3.3}, there exists a vertex subset $S$ in a single partite set of $H$ such that $i(H-S)>2|S|.$
Let $I$ denote the set of isolated vertices in $H-S$ and $j_S:=\{j_s~|~s\in S\},$ $j_I:=\{j_u~|~u\in I\}$ for $1\leq j\leq n.$
Let $V(P_n\square H)=X\cup Y$ be a bipartition of $P_n\square H$ with $|X|\leq |Y|$. For the case $|X|=|Y|$, let $Y$ be the partite set which contains $1_S$. Note that $1_I, 2_S\subseteq X$, $2_I\subseteq Y$,  and $2|1_S|=2|S|<i(H-S)=|1_I|.$
If $|X|<|Y|$, then $c(P_n\square H-X)=|Y|>|X|$, implying that $P_n\square H$ is not $1$-tough.
Suppose $|X|=|Y|$. Set $X'=(X\cup 1_S)-1_I$ and $Y'=(Y\cup 1_I)-1_S.$ Now $1_I, 2_I\subseteq Y'.$
Since $1_u2_u$ is the only possible edge in $Y'$ for each $u\in I$, we have $c(P_n\square H-X')\geq |Y'|-|1_I|=|Y|-|1_S|>|X|+|1_S|-|1_I|=|X'|$. Thus $P_n\square H$ is not $1$-tough.
\end{proof}

Considering the special case $n=1$ in Theorem~\ref{2.4}, we have the
following corollary, which is of independent interest.

\begin{cor}
An $1$-tough bipartite graph has a path factor.
\end{cor}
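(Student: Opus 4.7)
The plan is to invoke Theorem~\ref{2.4} in the degenerate case $n=1$. The graph $P_1$ consists of a single vertex with no edges, so by the definition of the Cartesian product the ``vertical'' edges $v_u w_u$ with $vw \in E(P_1)$ all disappear, leaving only the ``horizontal'' edges $1_u 1_w$ with $uw \in E(H)$; thus $P_1 \square H \cong H$. Consequently Theorem~\ref{2.4} specialized to $n=1$ reads: a bipartite graph $H$ without a path factor is not $1$-tough, which is precisely the contrapositive of the desired corollary.

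The only subtlety is that the vertex classes $2_S$ and $2_I$ appearing in the proof of Theorem~\ref{2.4} no longer exist when $n=1$, so one should verify that the argument still goes through. This can be handled by a short direct case analysis. Using Proposition~\ref{p3.3}, fix $S$ in a single partite set $A$ of $H$ with $i(H-S) > 2|S|$, and let $I$ be the set of isolated vertices of $H-S$. If some $v \in I$ lies in $A$, then all neighbors of $v$ in $H$ would have to lie in $B \cap S \subseteq B \cap A = \emptyset$, so $v$ is isolated in $H$; hence $H$ is disconnected and fails to be $1$-tough. Otherwise $I \subseteq B$, and taking $S$ itself as a cut set gives $c(H-S) \geq |I| > 2|S|$, which again contradicts $1$-toughness.

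The main, and indeed only, obstacle is this short degeneracy check; once it is in place, the corollary follows immediately from Theorem~\ref{2.4}.
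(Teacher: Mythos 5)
Your proposal is correct and takes essentially the same route as the paper, which likewise derives the corollary by specializing Theorem~\ref{2.4} to $n=1$ (where $P_1\square H\cong H$). Your extra degeneracy check is a sensible addition rather than a deviation, since the paper's proof of Theorem~\ref{2.4} does refer to the vertices $2_S$, $2_I$ and so tacitly assumes $n\geq 2$; your two-case argument (an isolated vertex of $H$ versus $I$ contained in the partite set opposite $S$) correctly closes that small gap.
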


\section{Trees with perfect matchings}\label{s_4}

Let $C_n$ denote the cycle of order $n$. Results about the Hamiltonicity of Cartesian product graphs have been proved in several papers. For instance, the papers \cite{cff:09},\cite{dpp:05} and \cite{rb:73} have mentioned the following result.

\begin{thm}[\cite{rb:73}]\label{cnt}
Let $T$ be a tree.  If $n\geq\Delta(T)$, then $C_n\square T$ is Hamiltonian. \qed
\end{thm}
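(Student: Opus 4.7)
The plan is induction on $|V(T)|$ with a strengthened inductive hypothesis. Specifically, I would prove: for any tree $T$ with $\Delta(T)\le n$, there is a Hamiltonian cycle $H$ of $C_n\square T$ such that, for every $w\in V(T)$, exactly $n-\deg_T(w)$ of the ``vertical'' edges $w_iw_{i+1}$ (indices mod $n$) belong to $H$; equivalently, for every tree edge $uw$, exactly two of the ``horizontal'' edges $u_jw_j$ belong to $H$. The base case $|V(T)|=1$ is trivial, since $C_n\square T\cong C_n$ uses all $n$ vertical edges.

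For the inductive step I would pick a leaf $v$ of $T$ with unique neighbor $u$ and apply the hypothesis to $T':=T-v$, which still satisfies $\Delta(T')\le \Delta(T)\le n$. This produces a Hamiltonian cycle $H'$ of $C_n\square T'$ with exactly $n-\deg_{T'}(u)=n-\deg_T(u)+1\ge 1$ vertical edges at column $u$. I would then choose any such edge $u_iu_{i+1}\in H'$, delete it, and splice in the $v$-column by inserting the path
\[
u_i,\ v_i,\ v_{i-1},\ \ldots,\ v_{i+2},\ v_{i+1},\ u_{i+1}
\]
that traverses the copy of $C_n$ above $v$ the long way around. The resulting cycle $H$ is Hamiltonian in $C_n\square T$: at column $v$ it uses $n-1=n-\deg_T(v)$ vertical edges, at column $u$ it loses one vertical edge (so it has $n-\deg_T(u)$) and gains the two horizontal edges $u_iv_i$, $u_{i+1}v_{i+1}$, while all other columns remain unchanged. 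Hence the strengthened hypothesis is propagated.

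The main obstacle is identifying the right strengthening. The naive induction ``$C_n\square T$ is Hamiltonian'' is too weak: the splicing step relies on finding a vertical edge $u_iu_{i+1}$ in $H'$, and such an edge need not exist a priori if $\deg_{T'}(u)=n$, since all $2n$ cycle slots at column $u$ could be consumed by horizontal edges. The strengthened hypothesis rules this out and exactly preserves the per-column accounting needed for the next splice. The hypothesis $n\ge\Delta(T)$ is used precisely through $\deg_{T'}(u)\le\deg_T(u)-1\le n-1$, which guarantees at least one vertical edge at column $u$ available for splicing.
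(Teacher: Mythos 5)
Your argument is correct. Note, however, that the paper does not actually prove Theorem~\ref{cnt}: it is quoted from Rosenfeld and Barnette \cite{rb:73} with no argument supplied, so there is no in-paper proof to compare against. What you have written is a complete and valid proof, and it is precisely the cycle analogue of the inductive scheme the paper does use for its own results (Theorems~\ref{pn2} and~\ref{pfactor}): strengthen the induction to control how many ``vertical'' edges of each column survive in the Hamiltonian cycle, then splice by trading one vertical edge in each of two adjacent columns for two horizontal edges. The instructive difference is where the base of the peeling sits. Over $C_n$ you may delete a single leaf $v$, because the single-column graph $C_n\square K_1\cong C_n$ is itself Hamiltonian and the $v$-column can be absorbed as a path traversing the cycle the long way; this is why no matching or path-factor hypothesis is needed and $n\ge\Delta(T)$ suffices. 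Over $P_n$ a single column $P_n\square K_1\cong P_n$ is not Hamiltonian, which forces the paper to peel off whole $P_2$ (resp.\ $P_2$ or $P_3$) components of a perfect matching (resp.\ path factor) via Lemma~\ref{3.1}, and is the source of the extra hypotheses and the stronger bound $n\ge 4\Delta-2$ in Theorem~\ref{pfactor}. Your closing remark correctly identifies that $n\ge\Delta(T)$ enters only through $\deg_{T'}(u)\le n-1$, guaranteeing a spliceable vertical edge at column $u$; this matches the role the same inequality plays in the paper's proof of Theorem~\ref{pn2}. (One cosmetic point: the theorem implicitly assumes $n\ge 3$ so that $C_n$ is a genuine cycle; your base case needs this too.)
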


Motivated by Theorem~\ref{cnt}, we will prove the Hamiltonicity of $P_n\square T$. Before doing this we comment by the following lemma to  show that the assumption  $n\geq\Delta(T)$ in  Theorem~\ref{cnt}  is necessary.

\begin{lem}\label{1.2}
Let $G_1$ be a connected graph and $T$ be a tree. If $\Delta(T)>|V(G_1)|$, then the Cartesian product $G_1\square T$ is not $1$-tough.
\end{lem}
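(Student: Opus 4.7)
The plan is to exhibit an explicit cut set $S$ that witnesses the failure of $1$-toughness, by exploiting a vertex of maximum degree in $T$.

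First I would pick a vertex $w\in V(T)$ with $\deg_T(w)=\Delta(T)$, and let $u_1,u_2,\ldots,u_{\Delta(T)}$ be its neighbors in $T$. Because $T$ is a tree, deleting $w$ splits $T$ into exactly $\Delta(T)$ subtrees $T_1,T_2,\ldots,T_{\Delta(T)}$, where $u_i\in V(T_i)$. This is the standard fact about cut vertices in trees and needs no real justification.

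Next, set
$$S:=\{v_w\mid v\in V(G_1)\},$$
the ``fiber'' over $w$ in the Cartesian product $G_1\square T$. Clearly $|S|=|V(G_1)|$. The key observation is that in $G_1\square T - S$, each fiber collection $\{v_u\mid v\in V(G_1),\,u\in V(T_i)\}$ induces a copy of $G_1\square T_i$, which is connected since both $G_1$ and $T_i$ are connected. Moreover, no edge of $G_1\square T$ joins vertices lying over different subtrees $T_i, T_j$ without passing through the fiber over $w$, because every $w$-to-$T_j$ path in $T$ uses $w$, and the only product edges between distinct fibers correspond to edges of $T$. Hence these $\Delta(T)$ induced subgraphs are the components of $G_1\square T - S$ (or at the very least give a lower bound), so
$$c(G_1\square T - S)\geq \Delta(T) > |V(G_1)| = |S|.$$

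Since $\Delta(T)>|V(G_1)|\geq 1$ forces $\Delta(T)\geq 2$, the set $S$ really is a cut set (giving at least two components), and $S\neq V(G_1\square T)$ since $T$ has vertices outside $\{w\}$. The inequality $c(G_1\square T-S)>|S|$ then shows $G_1\square T$ is not $1$-tough. There is no real obstacle here; the only point that needs a careful sentence is verifying that the pieces $G_1\square T_i$ are pairwise disconnected in $G_1\square T-S$, which I would argue from the definition of the edge set of a Cartesian product.
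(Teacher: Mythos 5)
Your proposal is correct and is essentially the paper's own proof: the paper also deletes the fiber $S=\{u_v : u\in V(G_1)\}$ over a maximum-degree vertex $v$ of $T$ and observes $c(G_1\square T-S)=\Delta(T)>|S|$. You simply spell out the (routine) verification that the remaining pieces $G_1\square T_i$ are connected and pairwise non-adjacent, which the paper leaves implicit.
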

\begin{proof}
Find $v\in V(T)$ with $\deg(v)=\Delta(T)$, choose $S=\{u_v:u\in V(G_1)\}$ and note that $|S|=|V(G_1)|$. Now $c(G_1\square T -S)=\Delta(T)> |V(G_1)|=|S|$, which means that $G_1\square T$ is not $1$-tough.
\end{proof}

 Let $G$ be a graph with path factor $F$. Let $G_F$ be the graph with vertex set $F$ and two components $c_1,c_2\in F$ are  {\it adjacent} if there exist vertices $u\in c_1, v\in c_2$ such that $uv\in E(G)$.
In particular, if $T$ is a tree with  path factor $F$ then $T_F$ is a tree, deleting a leaf $c$ in $T_F$ yields a subtree of $T_F$, and $T-c$ is a subtree of $T$. Hence we have  the following lemma.

\begin{lem}\label{3.1}
For a tree $T$ with a $\{P_2,P_3\}$-factor $F$, there exists a component $c$ of $F$ such that $T-c$ is a tree with a $\{P_2,P_3\}$-factor $F-\{c\}$.\qed
\end{lem}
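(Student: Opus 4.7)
The plan is to exploit the auxiliary graph $T_F$ defined just before the lemma, whose vertices are the components of $F$ and whose edges record adjacencies in $T$. First I would verify that $T_F$ is itself a tree. Connectivity is immediate: since each component of $F$ is connected and $T$ is connected, any path in $T$ between vertices in different components $c, c'$ of $F$ projects to a walk in $T_F$ from $c$ to $c'$. For acyclicity, a hypothetical cycle $c_1 c_2 \cdots c_k c_1$ in $T_F$ would lift to a cycle in $T$ by concatenating, for each $i$, a chosen edge of $T$ joining $c_i$ to $c_{i+1}$ with the (unique) internal path inside the connected component $c_i$ between the two relevant endpoints; since each component is a $P_2$ or $P_3$, this concatenation is well-defined and closed, contradicting acyclicity of $T$.

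Once $T_F$ is known to be a tree, the core of the argument is to pick a leaf. If $|F| \geq 2$, choose any leaf $c$ of $T_F$; then $T_F - c$ is a subtree, so the vertices $V(T) \setminus V(c)$ induce a connected subgraph of $T$. Being an induced subgraph of a tree, this subgraph is acyclic, hence itself a tree, and by construction it is $T - c$. The remaining components $F - \{c\}$ partition $V(T-c)$ into subgraphs isomorphic to $P_2$ or $P_3$ whose edges all lie in $T-c$, so they constitute a $\{P_2,P_3\}$-factor of $T-c$. The degenerate case $|F| = 1$, in which $T$ itself is a $P_2$ or $P_3$, is disposed of by taking $c$ to be the unique component and regarding the empty graph as a tree with the empty path factor.

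The only nontrivial obstacle is the verification that $T_F$ is a tree; once that is established, the lemma reduces to the standard observation that removing a leaf from a tree produces a smaller tree, combined with the trivial observation that the restriction of a $\{P_2,P_3\}$-factor to the complement of one of its components remains a $\{P_2,P_3\}$-factor of the induced subgraph.
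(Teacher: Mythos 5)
Your proposal is correct and follows essentially the same route as the paper, which likewise forms the quotient tree $T_F$, observes that it is a tree, and deletes a leaf component; you merely supply the details (connectivity and acyclicity of $T_F$, connectivity of $T-c$) that the paper leaves implicit in the paragraph preceding the lemma. The only cosmetic caveat is that in your acyclicity step you should note that the lifted closed walk traverses each bridging edge exactly once, which is what actually forces a cycle in $T$.
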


For $v\in V(T)$ let $B_v:=\{i_v(i+1)_v~|~1\leq i<n\}\subseteq E(P_n\square T).$
Now for $T=P_2$ and $V(T)=\{u,w\}$, the set
$\{1_u1_w\}\cup B_u \cup B_w\cup\{n_un_w\}$ of edges in $P_n\square T$
forms a Hamiltonian cycle, and call it the {\it standard} Hamiltonian cycle for $P_n\square P_2$.  To avoid confusions, the degree of vertex $v$ in $G$ will be denoted by $\deg_G(v)$.
To prove Theorem~\ref{main}(a), it is sufficient to find a Hamiltonian cycle of $P_n\square T$ where $n=|V(G_1)|$ and $T$ is a spanning tree of $G_2$ that contains  perfect matching $F$ of $G_2$. Note that $n\geq \Delta(G_2)\geq\Delta(T)$. For the convenience of proof, we state a stronger version as follows.

\begin{thm}\label{pn2}
Let $T$ be a tree with a perfect matching. If $n\geq\Delta(T)$, then there exists a Hamiltonian cycle of $P_n\square T$ which contains exactly $n-\deg_T(v)$ of the edges from the set $B_v$ for any vertex $v\in V(T)$. In particular, Theorem~\ref{main} {\it (a)} is proved.
\end{thm}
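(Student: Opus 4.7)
The plan is to induct on $|V(T)|$, which is even since $T$ has a perfect matching. The base case is $T=P_2$, say with vertices $u,w$: the standard Hamiltonian cycle of $P_n\square P_2$ described just before the theorem uses all $n-1$ edges of each $B_v$, matching $n-\deg_T(u)=n-\deg_T(w)=n-1$.

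For the inductive step I will assume $|V(T)|\ge 4$ with perfect matching $F$. Lemma~\ref{3.1} supplies a component $c=\{u,w\}$ of $F$ so that $T-c$ is a tree with perfect matching $F-\{c\}$. Because $c$ is a leaf of the tree $T_F$ and $T$ is itself a tree, exactly one edge of $T$ joins $c$ to $V(T)-c$; labelling so that this edge is $wy'$ with $y'\in V(T-c)$ forces $u$ to be a leaf of $T$ and $\deg_T(w)=2$. Since $\Delta(T-c)\le\Delta(T)\le n$, the inductive hypothesis yields a Hamiltonian cycle $C'$ of $P_n\square(T-c)$ containing exactly $n-\deg_{T-c}(v)$ edges of $B_v$ for every $v\in V(T-c)$.

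The main move is to splice the two columns above $u$ and $w$ into $C'$. The $B_{y'}$-edge count of $C'$ is $n-\deg_{T-c}(y')=n-\deg_T(y')+1\ge 1$, so some edge $i_{y'}(i+1)_{y'}$ of $B_{y'}$ lies on $C'$ with $1\le i\le n-1$. I will delete that edge and insert the detour from $i_{y'}$ to $(i+1)_{y'}$ that goes $i_{y'}\to i_w$, then descends through $B_w$ to $1_w$, crosses to $1_u$, traverses all of $B_u$ up to $n_u$, crosses to $n_w$, descends through $B_w$ to $(i+1)_w$, and finally steps to $(i+1)_{y'}$. This detour visits every vertex of the $u$- and $w$-columns exactly once, uses all $n-1=n-\deg_T(u)$ edges of $B_u$ and exactly $(i-1)+(n-i-1)=n-2=n-\deg_T(w)$ edges of $B_w$, drops the $B_{y'}$-count by one to $n-\deg_T(y')$, and leaves every other column unchanged since $\deg_{T-c}(v)=\deg_T(v)$ for $v\ne y'$. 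The modified cycle is therefore the desired Hamiltonian cycle of $P_n\square T$, closing the induction.

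Theorem~\ref{main}(a) will then follow by taking a spanning path of $G_1$ (so $P_n\subseteq G_1$ with $n=|V(G_1)|$) and any spanning tree $T$ of $G_2$ containing the given perfect matching; since $\Delta(T)\le\Delta(G_2)\le n$, the cycle just built sits inside $P_n\square T\subseteq G_1\square G_2$. The hard part is choosing the correct inductive strengthening: it is the precise count ``$n-\deg_T(v)$'' that both guarantees a deletable edge in $B_{y'}$ and matches what the detour consumes in the $u$- and $w$-columns, and the structural fact $\deg_T(u)=1$, $\deg_T(w)=2$ for a leaf component of $T_F$ is what makes the arithmetic balance.
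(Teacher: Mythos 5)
Your proposal is correct and follows essentially the same route as the paper: induction on $|V(T)|$ via Lemma~\ref{3.1}, with the inductive cycle repaired by deleting one $B_{y'}$-edge (guaranteed to exist by the $n-\deg$ count) and splicing in the standard Hamiltonian cycle of $P_n\square c$ minus one $B_w$-edge, which is exactly your explicit ``detour.'' The only cosmetic difference is that you spell out the leaf structure ($\deg_T(u)=1$, $\deg_T(w)=2$) that the paper leaves implicit in its edge-count verification.
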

\begin{proof}
Apply induction on the number of vertices of $T$. For $T=P_2$, the standard Hamiltonian cycle for $P_n\square P_2$ satisfies the requirement since  $|B_v|=n-1=n-\deg_T(v)$ for $v\in V(P_2)$.

For a tree $T$ with a perfect matching $F$. By Lemma~\ref{3.1}, there exists a component (an edge) $c$ in $F$ such that $T-c$ is a tree with a perfect matching. Let $u_1\in c$ and $u_2\in V(T-c)$ such that $u_1$ and $u_2$ are adjacent. Let $H'$ be the standard Hamiltonian cycles of $P_n\square c$. Since the subtree $T'=T-c$ of $T$ has a perfect matching, $|V(T')|<|V(T)|$ and $n\geq\Delta(T)\geq\Delta(T')$,  by the induction hypothesis, there is a Hamiltonian cycle $H''$ of $P_n\square T'$ which contains exactly $n-\deg_{T'}(v)$ edges from the set $B_v$ for any vertex $v\in V(T')$. Since $n-\deg_{T'}(u_2)=n-(\deg_T(u_2)-1)\geq n-\Delta(T)+1\geq 1$, there exists a $j$ such that $j_{u_2}(j+1)_{u_2}\in H''$. Now
$$H=H'\cup H''\cup\{j_{u_1}j_{u_2},(j+1)_{u_1}(j+1)_{u_2}\}-\{j_{u_1}(j+1)_{u_1},j_{u_2}(j+1)_{u_2}\}$$
 is a Hamiltonian cycle of $P_n\square T$.

To check that $H$ satisfies the edge requirement, we only need to check those vertices in $T$ whose incident edges have been changed in the induction step, which are vertices $u_1$ and $u_2.$ For $u_1$, all the $n-1$ edges of $B_{u_1}$  are in the cycle $H'$. We delete one of them, so there are $n-2=n-\deg_T(u_1)$ edges from $B_{u_1}$ in $H$. For $u_2$, there are $n-\deg_{T'}(u_2)=n-(\deg_T(u_2)-1)$ edges from  $B_{u_2}$ in the cycle $H''$ by the induction hypothesis. We delete one of them, so there are $n-(\deg_T(u_2)-1)-1=n-\deg_T(u_2)$ edges from  $B_{u_2}$ in $H$.  This completes the proof.
\end{proof}

The paper \cite{cff:09} has proved that $G_1\square G_2$ is Hamiltonian when $G_1$ is traceable with $|V(G_1)|$ an even integer no less than $\Delta(G_2)-1$ and $G_2$ contains an even $2$-factor (i.e. a spanning subgraph consisting of even cycles). Since an even $2$-factor must contain an $1$-factor, so Theorem~\ref{main}{\it (a)} is a stronger result apart from the case $|V(G_1)|=\Delta(G_2)-1$.

\section{Graphs with path factors}\label{s4}

In this section, we construct a Hamiltonian cycle of $P_n\square G$ where $G$ is connected with a path factor and $n$ is an even integer with $n\geq 4\Delta(G)-2$.  By Lemma~\ref{p3.1}, $G$ has a $\{P_2, P_3\}$-factor $F$.
Let  $T$ be the spanning subtree of $G$ that contains $F$. It suffices to find a Hamiltonian cycle in $P_n\square T$.

For $v\in V(T)$, let $L_v=\{i_v(i+1)_v~|~i\equiv 0,1,3\Mod{4}\}, C_v=\{i_v(i+1)_v~|~i\equiv 0,2\Mod{4}\}, R_v=\{i_v(i+1)_v~|~i\equiv 1,2,3\Mod{4}\}$ denote three special subsets of the edge set $B_v$ described in the last section. For $G=P_3$ with $V(G)=\{u,v,w\}$ and $E(G)=\{uv,vw\}$, the set $\{1_u1_v\}\cup\{n_un_v,n_vn_w\}
\cup L_u\cup C_v\cup R_w
\cup\{i_ui_v:i\equiv 2,3\Mod{4}\}
\cup\{i_vi_w:i\equiv 0,1\Mod{4}\}$ of edges
forms a Hamiltonian cycle, and call it the {\it standard} Hamiltonian cycle for $P_n\square P_3$. See Figure 2 for the
standard Hamiltonian cycle for $P_{10}\square P_3$.
\bigskip

\begin{center}
\begin{tikzpicture}[rotate=270,line cap=round,line join=round,>=triangle 45,x=0.6cm,y=0.65cm]

\clip(0,0) rectangle (6,11.5);

\draw (4.5,0) node[anchor=north west] {$1_u$};
\draw (2.5,0) node[anchor=north west] {$1_v$};
\draw (0.5,0) node[anchor=north west] {$1_w$};
\draw (4.5,10) node[anchor=north west] {$10_u$};
\draw (2.5,10) node[anchor=north west] {$10_v$};
\draw (0.5,10) node[anchor=north west] {$10_w$};
\draw [line width=2pt] (1,1)-- (1,2);
\draw [line width=2pt] (1,3)-- (1,2);
\draw [line width=2pt] (1,3)-- (1,4);
\draw [line width=0.2pt,color=gray] (1,5)-- (1,4);
\draw [line width=2pt] (1,5)-- (1,6);
\draw [line width=2pt] (1,6)-- (1,7);
\draw [line width=2pt] (1,7)-- (1,8);
\draw[line width=0.2pt,color=gray] (1,8)-- (1,9);
\draw [line width=2pt] (1,9)-- (1,10);

\draw [line width=2pt] (3,1)-- (1,1);
\draw [line width=0.2pt,color=gray] (3,2)-- (1,2);
\draw [line width=0.2pt,color=gray] (3,3)-- (1,3);
\draw [line width=2pt] (3,4)-- (1,4);
\draw [line width=2pt] (3,5)-- (1,5);
\draw [line width=0.2pt,color=gray] (3,6)-- (1,6);
\draw [line width=0.2pt,color=gray] (3,7)-- (1,7);
\draw [line width=2pt] (3,8)-- (1,8);
\draw [line width=2pt] (3,9)-- (1,9);
\draw [line width=2pt] (3,10)-- (1,10);

\draw [line width=2pt] (3,1)-- (5,1);
\draw [line width=2pt] (3,2)-- (5,2);
\draw [line width=2pt] (3,3)-- (5,3);
\draw [line width=0.2pt,color=gray] (3,4)-- (5,4);
\draw [line width=0.2pt,color=gray] (3,5)-- (5,5);
\draw [line width=2pt] (3,6)-- (5,6);
\draw [line width=2pt] (3,7)-- (5,7);
\draw [line width=0.2pt,color=gray] (3,8)-- (5,8);
\draw [line width=0.2pt,color=gray] (3,9)-- (5,9);
\draw [line width=2pt] (3,10)-- (5,10);

\draw [line width=0.2pt,color=gray] (3,1)-- (3,2);
\draw [line width=2pt] (3,3)-- (3,2);
\draw [line width=0.2pt,color=gray] (3,3)-- (3,4);
\draw [line width=2pt] (3,5)-- (3,4);
\draw [line width=0.2pt,color=gray] (3,5)-- (3,6);
\draw [line width=2pt] (3,6)-- (3,7);
\draw [line width=0.2pt,color=gray] (3,7)-- (3,8);
\draw [line width=2pt] (3,8)-- (3,9);
\draw [line width=0.2pt,color=gray] (3,9)-- (3,10);

\draw [line width=2pt] (5,1)-- (5,2);
\draw [line width=0.2pt,color=gray] (5,3)-- (5,2);
\draw [line width=2pt] (5,3)-- (5,4);
\draw [line width=2pt] (5,5)-- (5,4);
\draw [line width=2pt] (5,5)-- (5,6);
\draw [line width=0.2pt,color=gray] (5,6)-- (5,7);
\draw [line width=2pt] (5,7)-- (5,8);
\draw [line width=2pt] (5,8)-- (5,9);
\draw [line width=2pt] (5,9)-- (5,10);

\begin{scriptsize}
\draw [fill=white] (1,1) circle (2.5pt);
\draw [fill=white] (1,2) circle (2.5pt);
\draw [fill=white] (1,3) circle (2.5pt);
\draw [fill=white] (1,4) circle (2.5pt);
\draw [fill=white] (1,5) circle (2.5pt);
\draw [fill=white] (1,6) circle (2.5pt);
\draw [fill=white] (1,7) circle (2.5pt);
\draw [fill=white] (1,8) circle (2.5pt);
\draw [fill=white] (1,9) circle (2.5pt);
\draw [fill=white] (1,10) circle (2.5pt);
\draw [fill=white] (3,1) circle (2.5pt);
\draw [fill=white] (3,2) circle (2.5pt);
\draw [fill=white] (3,3) circle (2.5pt);
\draw [fill=white] (3,4) circle (2.5pt);
\draw [fill=white] (3,5) circle (2.5pt);
\draw [fill=white] (3,6) circle (2.5pt);
\draw [fill=white] (3,7) circle (2.5pt);
\draw [fill=white] (3,8) circle (2.5pt);
\draw [fill=white] (3,9) circle (2.5pt);
\draw [fill=white] (3,10) circle (2.5pt);
\draw [fill=white] (5,1) circle (2.5pt);
\draw [fill=white] (5,2) circle (2.5pt);
\draw [fill=white] (5,3) circle (2.5pt);
\draw [fill=white] (5,4) circle (2.5pt);
\draw [fill=white] (5,5) circle (2.5pt);
\draw [fill=white] (5,6) circle (2.5pt);
\draw [fill=white] (5,7) circle (2.5pt);
\draw [fill=white] (5,8) circle (2.5pt);
\draw [fill=white] (5,9) circle (2.5pt);
\draw [fill=white] (5,10) circle (2.5pt);
\end{scriptsize}
\end{tikzpicture}
\medskip

{\bf Figure 2. } Standard Hamiltonian cycle for $P_{10}\square P_3$
\end{center}
\bigskip

By direct computation we have the following lemma.

\begin{lem}\label{lem4.1} For even integer $n$,
$|L_v\cap R_v|\geq |R_v\cap C_v|\geq |L_v\cap C_v|=\lceil \frac{n-4}{4}\rceil$.
\end{lem}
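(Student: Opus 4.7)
The plan is to unwind the definitions so that each intersection becomes a single congruence condition modulo $4$ on the index $i$. Since $L_v$, $C_v$, $R_v$ are subsets of $B_v = \{i_v(i+1)_v : 1 \leq i \leq n-1\}$ cut out by residue classes of $i$ modulo $4$, pairwise intersecting the allowed residue classes immediately gives that $L_v \cap C_v$ consists of the edges with $i \equiv 0 \Mod{4}$, that $R_v \cap C_v$ consists of the edges with $i \equiv 2 \Mod{4}$, and that $L_v \cap R_v$ consists of the edges with $i$ odd (i.e.\ $i \equiv 1,3 \Mod{4}$).

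From here the proof reduces to counting how many $i \in \{1,2,\ldots,n-1\}$ lie in each residue class, which I would do by splitting into the two sub-cases $n = 4k$ and $n = 4k+2$ (both possible since $n$ is even). In the sub-case $n = 4k$ I expect counts $k-1$, $k$, $2k$ for $|L_v \cap C_v|$, $|R_v \cap C_v|$, $|L_v \cap R_v|$ respectively; in the sub-case $n = 4k+2$ the counts are $k$, $k$, $2k+1$. A direct check confirms $\lceil (n-4)/4 \rceil = k-1$ when $n = 4k$ and $\lceil (n-4)/4 \rceil = k$ when $n = 4k+2$, matching the computed value of $|L_v \cap C_v|$ in each sub-case and giving the claimed equality.

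The chain of inequalities $|L_v \cap R_v| \geq |R_v \cap C_v| \geq |L_v \cap C_v|$ then reduces to the arithmetic comparisons $2k \geq k \geq k-1$ (when $n=4k$) and $2k+1 \geq k \geq k$ (when $n=4k+2$), both trivially true for $k \geq 1$. There is no real obstacle here beyond clean case-analysis bookkeeping; the lemma is essentially a direct computation, as the paper itself remarks, and the only mild subtlety is matching the ceiling expression $\lceil(n-4)/4\rceil$ to the case-by-case count.
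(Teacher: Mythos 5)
Your proof is correct and is precisely the direct computation the paper intends: the paper states the lemma ``by direct computation'' and omits the details, and your reduction of each pairwise intersection to a single residue class modulo $4$ (namely $i\equiv 0$, $i\equiv 2$, and $i$ odd) followed by the case split $n=4k$ versus $n=4k+2$ fills them in accurately, with all counts and the value of $\lceil(n-4)/4\rceil$ checking out. The only negligible remark is that in the sub-case $n=4k+2$ the value $k=0$ (i.e.\ $n=2$) also occurs and the inequalities $2k+1\geq k\geq k$ still hold there, so no restriction to $k\geq 1$ is needed.
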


We define the {\it type} of a vertex $v$ in $T$ as follows.
$v$ has type $B$ (resp. $C$) if $v$ is in an edge in $F$  (resp. if $v$ is the middle vertex in a path of length $3$ in $F$). For the two endpoints of a path of length $3$ in $F$, we arbitrarily assign one endpoint of type $L$ and the other of type $R$.  Let $\delta_X=1$ if $X\in \{B, L, R\}$ and $\delta_X=2$ if $X=C$. Note that $\delta_X=\deg_c(v)$
for $c\in F$ and $v\in c$ of type $X$. The following is a stronger version of Theorem~\ref{main}(b).

\begin{thm}\label{pfactor}
Let $T$ be a connected graph with a $\{P_2, P_3\}$-factor $F$ and $n$ be an even integer. If $n\geq 4\Delta(T)-2$, then $P_n\square T$ contains a Hamiltonian cycle $H$ such that for any vertex $v\in V(T)$ of type $X\in \{B, L, C, R\}$,
we have $H\cap B_v\subseteq X_v$ and $|H\cap B_v|=|X_v|-\deg_T(v)+\delta_X$.  In particular, Theorem~\ref{main}(b) holds.
\end{thm}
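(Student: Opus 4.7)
The plan is to induct on $|F|$, the number of components of $F$, after first noting that it suffices to assume $T$ is a tree: any connected $T$ possesses a spanning tree containing $F$, a Hamiltonian cycle produced for that subtree lifts to $P_n\square T$, and the quantities $\deg_T(v)$ are unchanged. In the base case $|F|=1$, the tree $T$ is $P_2$ or $P_3$ and the standard Hamiltonian cycle of $P_n\square T$ described above directly satisfies the containment and count requirements, as one checks using $\deg_T(v)=\delta_X$ at each vertex.

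For the inductive step, the component tree $T_F$ has at least one leaf $c$ (by the same argument as in Lemma~\ref{3.1}); let $u_1\in V(c)$ be the unique vertex of $c$ having a neighbor $u_2\in V(T)\setminus V(c)$ in $T$. By the inductive hypothesis applied to $T-V(c)$ with $\{P_2,P_3\}$-factor $F-\{c\}$, there is a Hamiltonian cycle $H''$ of $P_n\square(T-V(c))$ fulfilling the desired properties; let $H'$ be the standard Hamiltonian cycle of $P_n\square c$. We stitch them with the zipper move: pick an index $j\in\{1,\ldots,n-1\}$ such that $j_{u_1}(j+1)_{u_1}\in H'$ and $j_{u_2}(j+1)_{u_2}\in H''$, and set
\[
H=\bigl(H'\cup H''\cup\{j_{u_1}j_{u_2},(j+1)_{u_1}(j+1)_{u_2}\}\bigr)-\{j_{u_1}(j+1)_{u_1},j_{u_2}(j+1)_{u_2}\}.
\]
Standard arguments show $H$ is a Hamiltonian cycle of $P_n\square T$.

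The hard part is proving existence of such a $j$. Let $X,Y$ denote the types of $u_1,u_2$, and set $A_Z:=\{i:i_v(i+1)_v\in Z_v\}$, a set depending only on $Z\in\{B,L,C,R\}$. Because the standard cycle gives $H'\cap B_{u_1}=X_{u_1}$, the first condition forces $j\in A_X$. The inductive hypothesis yields $|H''\cap B_{u_2}|=|A_Y|-\deg_{T-V(c)}(u_2)+\delta_Y=|A_Y|-(\deg_T(u_2)-1-\delta_Y)$, so at most $\deg_T(u_2)-1-\delta_Y\le\Delta(T)-2$ indices of $A_Y$ are absent from $H''$. It therefore suffices to verify $|A_X\cap A_Y|\ge\Delta(T)-1$. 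A direct enumeration of the sixteen pairs shows that the minimum of $|A_X\cap A_Y|$ over $X,Y\in\{B,L,C,R\}$ equals $|L\cap C|=\lceil(n-4)/4\rceil$ (pairs involving $B$ reduce to $|A_Z|$ for the other coordinate, itself at least $|L\cap C|$), and Lemma~\ref{lem4.1} combined with the hypotheses $n\ge 4\Delta(T)-2$ and $n$ even yields $\lceil(n-4)/4\rceil\ge\Delta(T)-1$, as required.

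Finally we verify the containment and count properties of $H$. The construction only deletes edges of $B_v$, so $H\cap B_v\subseteq X_v$ is inherited. For $v\in V(c)\setminus\{u_1\}$ the $B_v$-edges of $H$ coincide with those of $H'$, and using $\deg_T(v)=\deg_c(v)=\delta_X$ the count $|X_v|$ equals $|X_v|-\deg_T(v)+\delta_X$; similarly for $v\in V(T)\setminus(V(c)\cup\{u_2\})$, with $\deg_{T-V(c)}(v)=\deg_T(v)$. At $u_1$ we lose exactly one edge from $H'\cap B_{u_1}=X_{u_1}$, and $\deg_T(u_1)=\delta_X+1$ gives the new count $|X_{u_1}|-1=|X_{u_1}|-\deg_T(u_1)+\delta_X$; the computation at $u_2$ is analogous, closing the induction.
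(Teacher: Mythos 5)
Your proposal is correct and follows essentially the same route as the paper's proof: induction peeling off a leaf component $c$ of $T_F$ (via Lemma~\ref{3.1}), standard cycles for $P_n\square P_2$ and $P_n\square P_3$ as base cases, a counting argument via Lemma~\ref{lem4.1} and $n\geq 4\Delta(T)-2$ to find a common index $j$, and the same edge-swap to splice $H'$ and $H''$. The only cosmetic differences are that you induct on $|F|$ rather than $|V(T)|$ and that you make explicit the reduction to a spanning tree, which the paper performs in the paragraph preceding the theorem.
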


\begin{proof} We prove by  induction on the number of vertices of $T$.  For $T=P_2$, any vertex $v$ of $P_2$ has type $B$ and the standard Hamiltonian cycle $H_1$ of $P_n\square P_2$ satisfies  $|H_1\cap B_v|=n-1=|B_v|-\deg_{P_2}(v)+1$ for vertex $v\in P_2$.
For $T=P_3$, a vertex $v$ of $P_3$ has type $X\in \{L, C, R\}$ and the standard Hamiltonian cycle $H_2$ of $P_n\square P_3$ satisfy $|H_2\cap B_v|= |X_v|=|X_v|-\deg_{P_3}(v)+\delta_X$.

Now assume $|V(T)|\geq 4$.  By Lemma~\ref{3.1}, there exists a component $c$ of $F$ such that $T-c$ is a tree with the path factor $F-\{c\}$. Let $u_1\in c$ and $u_2\in V(T-c)$ such that $u_1$ and $u_2$ are adjacent.
Assume $u_1$ has type $X$ and $u_2$ has type $Y$.
Let $H'$ be the standard Hamiltonian cycle of $P_n\square c$ and $P_n\square T-c$ contains
a Hamiltonian  cycle $H''$
  that satisfies $H''\cap B_{u_2}\subseteq Y_{u_2}$ and $|H_2\cap B_{u_2}|=|Y_{u_2}|-(\deg_T(u_2)-1)+\delta_Y$ by induction hypothesis. Referring to Lemma~\ref{lem4.1}, we have $|H_2\cap B_{u_2}\cap X_{u_2}|\geq |Y_{u_2}\cap X_{u_2}|-(\deg_T(u_2)-1)+\delta_Y\geq \lceil \frac{n-4}{4}\rceil-\deg_T(u_2)+2\geq  \lceil \frac{4\Delta(T)-6}{4}\rceil-\deg_T(u_2)+2\geq 1.$
Pick $j_{u_2}(j+1)_{u_2}\in H_2\cap B_{u_2}\cap X_{u_2}$ and then $j_{u_1}(j+1)_{u_1}\in X_{u_1}\subseteq H'$.
Now
$$H=H'\cup H''\cup\{j_{u_1}j_{u_2},(j+1)_{u_1}(j+1)_{u_2}\}-\{j_{u_1}(j+1)_{u_1},j_{u_2}(j+1)_{u_2}\}$$
is a Hamiltonian cycle of $P_n\square T$.

To check $H$ satisfies the edge requirements, we only need to check for $v\in \{u_1, u_2\}$.
This follows from $|H\cap X_{u_1}|= |H'\cap X_{u_1}|-1=|X_{u_1}|-1=|X_{u_1}|-\deg_T(u_1)+\delta_X$ and
$|H\cap Y_{u_2}|= |H''\cap Y_{u_2}|-1=|Y_{u_2}|-(\deg_T(u_2)-1)+\delta_Y-1=|Y_{u_2}|-\deg_T(u_2)+\delta_Y.$
\end{proof}

This theorem can be compared to a result in \cite{cff:09}. The authors considered a sub-class of trees called $1$-pendant trees. Let $T$ be a $1$-pendant tree that contains a path factor and $n$ an odd integer no less than $2\Delta(T)-2$ . Now $P_n\square T$ has a cycle omitting at most $s$ vertices, where $s$ is the number of odd components of the path factor. Since some of $P_n\square T$ is not $1$-tough when $n$ is odd, so practically they gave a good direction to find the long cycles of Cartesian product graphs which are not $1$-tough.

\section{Proofs of the corollaries}\label{s2}

\noindent {\bf Proof of Corollary~\ref{c2}:}\\
Let $S$ be a vertex subset of $V(G_2)$. Now the number of edges between $S$ and the set of isolated vertices of $G_2-S$
is at least $i(G_2-S)\delta(G_2)$ and is at most $|S|\Delta(G_2)$.
Since $2\delta(G_2)\geq \Delta(G_2)$, we have $i(G_2-S)\leq 2|S|$ for all $S\subseteq V(G_2)$. By Proposition~\ref{p3.2}, $G_2$ has a path factor and by Theorem~{\ref{main}}(b) we complete the proof. \qed
\bigskip

\noindent {\bf Proof of Corollary~\ref{thm0}:}\\
This is immediate by applying Lemma~\ref{l2.3} to Theorem~\ref{main}(b). \qed
\bigskip

\noindent {\bf Proof of Corollary~\ref{thm2}:} \\
$(1)\Rightarrow (2)$ is clear. $(2)\Rightarrow (3)$ is from Lemma~\ref{1.2}.
$(3)\Rightarrow (1)$ is from Theorem~\ref{main}(a).\qed
\bigskip

\noindent {\bf Proof of Corollary~\ref{thm1}:}\\
$(1)\Rightarrow (2)$ is clear. $(2)\Rightarrow (3)$  is from Theorem~\ref{2.4}. $(3)\Rightarrow (1)$  is from Theorem~\ref{main}{(b)}.\qed
\bigskip

To show that the assumption $n\geq 4\Delta(H)-2$ in Corollary~\ref{thm1} can not be replaced by $n\geq\Delta(H)$, we provide a $1$-tough non-Hamiltonian graph $P_n\square T$ such that $T$ is a tree with a path factor and $n=\Delta(T)+1$.
\par
Let $T_1$ be a tree with vertex set $V(T_1)=\{1,2,3,4,5,6,7,8\}$ and edge set $E(T_1)=\{12,23,34,45,26,37,48\}$.

\begin{prop}\label{t1}
The graph $G=P_4\square T_1$ is $1$-tough but not Hamiltonian.
\end{prop}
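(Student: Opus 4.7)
The plan is to verify the non-Hamiltonicity and $1$-toughness assertions separately.

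For non-Hamiltonicity I argue by contradiction. Assume $H$ is a Hamilton cycle of $G := P_4 \square T_1$, and for each $T_1$-edge $uv$ set $b_{uv} := |H \cap \{i_u i_v : 1 \le i \le 4\}|$ (the cross-edges of $H$ associated with $uv$) and $a_v := |H \cap B_v|$ (the column-$v$ edges of $H$, using $B_v$ from Section~\ref{s_4}). Since $T_1$ is a tree, the edge $uv$ induces a vertex-cut of $G$ into two non-empty parts, which $H$ must traverse an even positive number of times; hence $b_{uv} \in \{2,4\}$. For every leaf $\ell$ of $T_1$ with neighbor $n$, the two $P_4$-corners $1_\ell$ and $4_\ell$ have $G$-degree two, so both of their incident edges lie in $H$; this forces the behavior of $H$ on column $\ell$ into one of two patterns according as $b_{\ell n} = 2$ or $4$.

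The crucial step is to show $b_{23}=b_{34}=2$ and that the two edges realizing $b_{34}$ are exactly $\{2_3 2_4,\,3_3 3_4\}$, while the two realizing $b_{23}$ are $\{2_2 2_3,\,3_2 3_3\}$. If $b_{34}=4$, the column-$4$ degree equation $2a_4+4+b_{45}+b_{48}=8$ combined with $b_{45},b_{48}\ge 2$ forces $b_{45}=b_{48}=2$ and $a_4=0$; but then $1_4$ is incident in $H$ to each of $1_3, 1_5, 1_8$, violating $\deg_H(1_4)=2$. Symmetrically $b_{23}=2$. In every admissible configuration of the leaves at column $4$, the corners $1_4, 4_4$ are already saturated by their two leaf cross-edges, which prevents $1_3 1_4$ and $4_3 4_4$ from being in $H$; hence the two edges of $b_{34}$ must lie at rows $2$ and $3$. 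The analogous argument at column $2$ pins down $b_{23}$. Now $2_3$ is incident in $H$ to $2_2 2_3$ and $2_3 2_4$, so $\deg_H(2_3)=2$; likewise $3_3$ is saturated. This forbids $2_3 2_7$ and $3_3 3_7$ from $H$, forcing $b_{37}=2$ with edges $\{1_3 1_7,\,4_3 4_7\}$. The column-$3$ degree equation then yields $a_3=1$, yet every internal column-$3$ edge ($1_3 2_3,\,2_3 3_3,\,3_3 4_3$) has a saturated endpoint, so $a_3=0$—a contradiction.

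For $1$-toughness I would use the $\{P_2,P_3\}$-factor $F=\{1\text{-}2\text{-}6,\;3\text{-}7,\;5\text{-}4\text{-}8\}$ of $T_1$ and let $H_1,H_2,H_3$ be the standard Hamilton cycles of the three blocks $P_4 \square P^i$ ($P^i\in F$) constructed in Sections~\ref{s_4} and \ref{s4}. Their union $H_1\cup H_2\cup H_3$ is a spanning $2$-factor of $G$, and $G$ contains eight additional "connector" cross-edges $\{i_2 i_3,\,i_3 i_4 : 1\le i\le 4\}$ linking the three blocks, together with the non-$H_2$ cross-edges $\{i_3 i_7 : i=2,3\}$ inside the middle block. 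For a cut set $S$, writing $\sigma_i:=|S\cap V(H_i)|$, the elementary bound $c(G-S)\le \sum_i \max(\sigma_i,1)$ already gives $c(G-S)\le|S|$ when every $\sigma_i\ge 1$. In the remaining cases (one or two $\sigma_i$ equal zero) the intact blocks merge with pieces of $H_2$ via the connector cross-edges; a short case analysis on $s_3:=|S\cap\text{col }3|$ and $s_7:=|S\cap\text{col }7|$ finishes: when $s_3<4$, each $i$ with $i_3\notin S$ simultaneously produces edges $i_2 i_3, i_3 i_4\in G-S$ that merge away the extra $\max(\sigma_i,1)-\sigma_i$ slack, while when $s_3=4$ the four units spent in $S$ on column $3$ directly absorb the un-merged intact blocks.

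The main obstacle is this final case analysis for $1$-toughness: the non-Hamiltonicity collapses cleanly once the corner-saturation at columns $2$ and $4$ is exploited, but bounding $c(G-S)$ when an entire factor-block is untouched by $S$ requires carefully counting the column-$7$-only pieces of $H_2-S$ that fail to merge into the big component, and then verifying that their number is always absorbed by the slack in $|S|$ coming from either $s_7\ge 1$ or $s_3=4$.
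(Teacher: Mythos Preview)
Your non-Hamiltonicity argument is correct and follows the same opening as the paper: force all edges at the degree-$2$ corners, observe that this saturates $1_2,4_2,1_4,4_4$, and deduce that the cross-edges $b_{23}$ and $b_{34}$ must sit at rows $2$ and $3$. Your endgame, however, is different from the paper's. The paper next forces $1_32_3$ and $3_34_3$ (since $1_3$ and $4_3$ have only those as remaining neighbours), then argues that not both of $2_22_3,\,2_32_4$ can be in $H$ and derives a separation contradiction through $3_23_3$. You instead saturate $2_3$ and $3_3$ directly via $b_{23},b_{34}$, pin $b_{37}$ to rows $1,4$, and finish with the column-$3$ degree count $2a_3+b_{23}+b_{34}+b_{37}=8$ forcing $a_3=1$ while every edge of $B_3$ has a saturated endpoint. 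This is a clean alternative; the degree-sum bookkeeping makes the contradiction fall out mechanically. (One minor remark: your first paragraph's degree-equation route to $b_{34}\ne 4$ is redundant, since the corner saturation at $1_4$ already kills row~$1$ for $b_{34}$ and hence excludes $b_{34}=4$ immediately.)

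For $1$-toughness your plan is genuinely different from the paper's and, as you anticipate, heavier. The paper does \emph{not} use the three-block $2$-factor; instead it exhibits a single cycle $C$ of length $30$ in $G$ missing only the two adjacent vertices $3_5,4_5$, whose edge has three neighbours $2_5,3_4,4_4$ on $C$. This reduces the toughness check to three quick cases according to whether $S$ meets $\{3_5,4_5\}$, and if not, whether $S\supseteq\{2_5,3_4,4_4\}$; each case is two lines. Your decomposition into $H_1\cup H_2\cup H_3$ can be pushed through, but the ``short case analysis'' is longer than you suggest: when two of the $\sigma_i$ vanish you must recover two merges, and when an entire connector column (say column~$2$ or column~$4$) lies in $S$ you need the extra slack coming from the fact that removing all of a middle column from a $12$-cycle $H_i$ leaves strictly fewer than four pieces in $G-S$ (because the outer columns are themselves $P_4$'s). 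None of this is hard, but it is several sub-cases rather than one. If you want an argument of comparable length to the paper's, look for a long cycle in $G$ that misses at most two adjacent vertices; once you have that, the toughness verification becomes nearly automatic.
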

\begin{proof}
If $G$ is Hamiltonian, the edges incident to degree two vertices of $G$ must contained in each Hamiltonian cycle. Therefore the edges $1_11_2,1_21_6,1_31_7,1_41_5,$ $1_41_8,1_12_1,1_52_5,1_62_6,1_72_7,1_82_8,3_14_1,3_54_5,3_64_6,3_74_7,3_82_4,4_14_2,4_24_6,4_34_7,$\\$4_44_5$ and $4_44_8$ (thick black edges in Figure~3(a)) are chosen. Since each of the vertices $1_2,1_4,$ $4_2$ and $4_4$ is already incident to two chosen edges, the four edges $1_21_3,1_31_4,4_24_3,4_34_4$ (dotted edges in Figure~3(b)) can not be chosen. Furthermore, this tells that the edges $1_32_3,3_34_3$ need to be chosen as shown in Figure~3(b). At this time, at least one of $2_22_3,2_32_4$ can not be chosen to complete the Hamiltonian cycle. Without loss of generality, says the edge $2_22_3$ (dashed edges in Figure~3(b)) has not been chosen. Now each of two internal disjoint paths from $2_2$ to $2_3$ in the Hamiltonian cycle contains the edge $3_23_3$, a contradiction. Hence $G$ is not Hamiltonian.
\par
Next we show that $G$ is $1$-tough. As $G$ depicted in Figure~3(c), there exists a cycle $C$ of order $30$ in $G$ such that  $V(G-C)=\{3_5,4_5\}$ and $3_54_5$ is an edge of $G$ that is incident to $3$ vertices $2_5,3_4,4_4$ of $C$. For a vertex set $S$, there are $3$ cases for $G-S$ to discuss : The set $S\cap\{3_5,4_5\}$ is non-empty; The set $S\cap\{3_5,4_5\}$ is empty and $\{2_5,3_4,4_4\}\subseteq S$; The set $S\cap\{3_5,4_5\}$ is empty and $\{2_5,3_4,4_4\}\not\subseteq S$.
\par
If the set $S\cap\{3_5,4_5\}$ is non-empty, then $c(\{3_5,4_5\}-(S\cap\{3_5,4_5\}))\leq|S\cap\{3_5,4_5\}|$. On the other hand, $c(S\cap C)\leq|C-(S\cap C)|$ since $C$ is $1$-tough. Because $G-S\subseteq(C-(S\cap C))\cup(\{3_5,4_5\}-S\cap\{3_5,4_5\})$, we conclude that $c(G-S)\leq c(C-(S\cap C))+c(\{3_5,4_5\}-(S\cap\{3_5,4_5\}))\leq|S\cap C|+|S\cap\{3_5,4_5\}|=|S|$ for all $S$ such that $S\cap\{3_5,4_5\}$ is non-empty.
\par
If the set $S\cap\{3_5,4_5\}$ is empty and $\{2_5,3_4,4_4\}\subseteq S$, then the subgraph induced by $\{3_5,4_5\}$ is a component of $G-S$. As depicted in Figure~3(d), the subgraph $G_1$ of $G$ induced by $V(G)-\{3_5,4_5,2_5,3_4,4_4\}$ contains a spanning tree such that all vertices has degree at most $2$ except an only one degree $3$ vertex. This implies $c(G_1-S')\leq|S'|+2$ for $S'=S-\{2_5,3_4,4_4\}$. Therefore,
$c(G-S)=c(G_1-S')+1\leq|S'|+3=|S|$ for all $S$ such that the subgraph induced by $\{3_5,4_5\}$ is a component of $G-S$.
\par
If the set $S\cap\{3_5,4_5\}$ is empty and $\{2_5,3_4,4_4\}\not\subseteq S$, then $S\subseteq C$ and the edge $3_54_5$ is adjacent to some vertices of $C-S$. Therefore, $c(G-S)\leq c(C-S)$ for all such $S$. Since the cycle $C$ is $1$-tough, $c(C-S)\leq|S|$. Hence $c(G-S)\leq c(C-S)\leq|S|$.
\par
In conclusion, $c(G-S)\leq|S|$ for all $S\subseteq V(G)$ which means $G$ is $1$-tough.

\end{proof}

\begin{center}
\begin{minipage}{0.49\textwidth}
\begin{tikzpicture}[line cap=round,line join=round,>=triangle 45,x=0.54cm,y=0.27cm]

\clip(-12,-1) rectangle (-1,17);
\draw [line width=2.pt] (-11,3)-- (-11,7);
\draw [line width=0.2pt,color=gray] (-11,7)-- (-11,11);
\draw [line width=2.pt] (-11,11)-- (-11,15);
\draw [line width=0.2pt,color=gray] (-9,2)-- (-9,6);
\draw [line width=0.2pt,color=gray] (-9,6)-- (-9,10);
\draw [line width=0.2pt,color=gray] (-9,10)-- (-9,14);
\draw [line width=0.2pt,color=gray] (-7,2)-- (-7,6);
\draw [line width=0.2pt,color=gray] (-7,6)-- (-7,10);
\draw [line width=0.2pt,color=gray] (-7,10)-- (-7,14);
\draw [line width=0.2pt,color=gray] (-5,2)-- (-5,6);
\draw [line width=0.2pt,color=gray] (-5,6)-- (-5,10);
\draw [line width=0.2pt,color=gray] (-5,10)-- (-5,14);
\draw [line width=2.pt] (-10,0)-- (-10,4);
\draw [line width=0.2pt,color=gray] (-10,4)-- (-10,8);
\draw [line width=2.pt] (-10,8)-- (-10,12);
\draw [line width=2.pt](-8,0)-- (-8,4);
\draw [line width=0.2pt,color=gray] (-8,4)-- (-8,8);
\draw [line width=2.pt] (-8,8)-- (-8,12);
\draw [line width=2.pt] (-3,1)-- (-3,5);
\draw [line width=0.2pt,color=gray] (-3,5)-- (-3,9);
\draw [line width=2.pt] (-3,9)-- (-3,13);
\draw [line width=2.pt] (-4,4)-- (-4,8);
\draw [line width=0.2pt,color=gray] (-4,8)-- (-4,12);
\draw [line width=2.pt] (-4,12)-- (-4,16);
\draw [line width=2.pt] (-11,3)-- (-9,2);
\draw [line width=0.2pt,color=gray] (-11,7)-- (-9,6);
\draw [line width=0.2pt,color=gray] (-11,11)-- (-9,10);
\draw [line width=2.pt] (-11,15)-- (-9,14);
\draw [line width=2.pt] (-10,0)-- (-9,2);
\draw [line width=0.2pt,color=gray] (-10,4)-- (-9,6);
\draw [line width=0.2pt,color=gray] (-10,8)-- (-9,10);
\draw [line width=2.pt] (-10,12)-- (-9,14);
\draw [line width=2.pt] (-8,0)-- (-7,2);
\draw [line width=0.2pt,color=gray] (-8,4)-- (-7,6);
\draw [line width=0.2pt,color=gray] (-8,8)-- (-7,10);
\draw [line width=2.pt] (-8,12)-- (-7,14);
\draw [line width=0.2pt,color=gray] (-9,2)-- (-7,2);
\draw [line width=0.2pt,color=gray] (-9,6)-- (-7,6);
\draw [line width=0.2pt,color=gray] (-9,10)-- (-7,10);
\draw [line width=0.2pt,color=gray] (-9,14)-- (-7,14);
\draw [line width=0.2pt,color=gray] (-5,2)-- (-7,2);
\draw [line width=0.2pt,color=gray] (-5,6)-- (-7,6);
\draw [line width=0.2pt,color=gray] (-5,10)-- (-7,10);
\draw [line width=0.2pt,color=gray] (-5,14)-- (-7,14);
\draw [line width=2.pt] (-5,2)-- (-4,4);
\draw [line width=0.2pt,color=gray] (-5,6)-- (-4,8);
\draw [line width=0.2pt,color=gray] (-5,10)-- (-4,12);
\draw [line width=2.pt] (-5,14)-- (-4,16);
\draw [line width=2.pt] (-5,2)-- (-3,1);
\draw [line width=0.2pt,color=gray] (-5,6)-- (-3,5);
\draw [line width=0.2pt,color=gray] (-5,10)-- (-3,9);
\draw [line width=2.pt] (-5,14)-- (-3,13);

\begin{scriptsize}
\draw [fill=white] (-11,3) circle (2.5pt);
\draw [fill=white] (-9,2) circle (2.5pt);
\draw [fill=white] (-10,0) circle (2.5pt);
\draw [fill=white] (-7,2) circle (2.5pt);
\draw [fill=white] (-5,2) circle (2.5pt);
\draw [fill=white] (-3,1) circle (2.5pt);
\draw [fill=white] (-4,4) circle (2.5pt);
\draw [fill=white] (-10,4) circle (2.5pt);
\draw [fill=white] (-11,7) circle (2.5pt);
\draw [fill=white] (-9,6) circle (2.5pt);
\draw [fill=white] (-7,6) circle (2.5pt);
\draw [fill=white] (-5,6) circle (2.5pt);
\draw [fill=white] (-4,8) circle (2.5pt);
\draw [fill=white] (-3,5) circle (2.5pt);
\draw [fill=white] (-8,4) circle (2.5pt);
\draw [fill=white] (-11,11) circle (2.5pt);
\draw [fill=white] (-11,15) circle (2.5pt);
\draw [fill=white] (-9,10) circle (2.5pt);
\draw [fill=white] (-9,14) circle (2.5pt);
\draw [fill=white] (-7,10) circle (2.5pt);
\draw [fill=white] (-7,14) circle (2.5pt);
\draw [fill=white] (-10,8) circle (2.5pt);
\draw [fill=white] (-10,12) circle (2.5pt);
\draw [fill=white] (-8,8) circle (2.5pt);
\draw [fill=white] (-8,12) circle (2.5pt);
\draw [fill=white] (-5,10) circle (2.5pt);
\draw [fill=white] (-4,12) circle (2.5pt);
\draw [fill=white] (-3,9) circle (2.5pt);
\draw [fill=white] (-5,14) circle (2.5pt);
\draw [fill=white] (-4,16) circle (2.5pt);
\draw [fill=white] (-3,13) circle (2.5pt);
\draw [fill=white] (-8,0) circle (2.5pt);
\end{scriptsize}
\end{tikzpicture}
\\(a) Edges with degree $2$ endpoints
\end{minipage}
\begin{minipage}{0.49\textwidth}
\begin{tikzpicture}[line cap=round,line join=round,>=triangle 45,x=0.54cm,y=0.27cm]

\clip(-12,-1) rectangle (-1,17);
\draw [line width=2.pt] (-11,3)-- (-11,7);
\draw [line width=0.2pt,color=gray] (-11,7)-- (-11,11);
\draw [line width=2.pt] (-11,11)-- (-11,15);
\draw [line width=0.2pt,color=gray] (-9,2)-- (-9,6);
\draw [line width=0.2pt,color=gray] (-9,6)-- (-9,10);
\draw [line width=0.2pt,color=gray] (-9,10)-- (-9,14);
\draw [line width=2.pt] (-7,2)-- (-7,6);
\draw [line width=0.2pt,color=gray] (-7,6)-- (-7,10);
\draw [line width=2.pt] (-7,10)-- (-7,14);
\draw [line width=0.2pt,color=gray] (-5,2)-- (-5,6);
\draw [line width=0.2pt,color=gray] (-5,6)-- (-5,10);
\draw [line width=0.2pt,color=gray] (-5,10)-- (-5,14);
\draw [line width=2.pt] (-10,0)-- (-10,4);
\draw [line width=0.2pt,color=gray] (-10,4)-- (-10,8);
\draw [line width=2.pt] (-10,8)-- (-10,12);
\draw [line width=2.pt](-8,0)-- (-8,4);
\draw [line width=0.2pt,color=gray] (-8,4)-- (-8,8);
\draw [line width=2.pt] (-8,8)-- (-8,12);
\draw [line width=2.pt] (-3,1)-- (-3,5);
\draw [line width=0.2pt,color=gray] (-3,5)-- (-3,9);
\draw [line width=2.pt] (-3,9)-- (-3,13);
\draw [line width=2.pt] (-4,4)-- (-4,8);
\draw [line width=0.2pt,color=gray] (-4,8)-- (-4,12);
\draw [line width=2.pt] (-4,12)-- (-4,16);
\draw [line width=2.pt] (-11,3)-- (-9,2);
\draw [line width=0.2pt,color=gray] (-11,7)-- (-9,6);
\draw [line width=0.2pt,color=gray] (-11,11)-- (-9,10);
\draw [line width=2.pt] (-11,15)-- (-9,14);
\draw [line width=2.pt] (-10,0)-- (-9,2);
\draw [line width=0.2pt,color=gray] (-10,4)-- (-9,6);
\draw [line width=0.2pt,color=gray] (-10,8)-- (-9,10);
\draw [line width=2.pt] (-10,12)-- (-9,14);
\draw [line width=2.pt] (-8,0)-- (-7,2);
\draw [line width=0.2pt,color=gray] (-8,4)-- (-7,6);
\draw [line width=0.2pt,color=gray] (-8,8)-- (-7,10);
\draw [line width=2.pt] (-8,12)-- (-7,14);
\draw [line width=0.2pt,color=gray, dotted] (-9,2)-- (-7,2);
\draw [line width=0.2pt,color=gray] (-9,6)-- (-7,6);
\draw [line width=0.2pt,color=gray, dashed] (-9,10)-- (-7,10);
\draw [line width=0.2pt,color=gray, dotted] (-9,14)-- (-7,14);
\draw [line width=0.2pt,color=gray, dotted] (-5,2)-- (-7,2);
\draw [line width=0.2pt,color=gray] (-5,6)-- (-7,6);
\draw [line width=0.2pt,color=gray] (-5,10)-- (-7,10);
\draw [line width=0.2pt,color=gray, dotted] (-5,14)-- (-7,14);
\draw [line width=2.pt] (-5,2)-- (-4,4);
\draw [line width=0.2pt,color=gray] (-5,6)-- (-4,8);
\draw [line width=0.2pt,color=gray] (-5,10)-- (-4,12);
\draw [line width=2.pt] (-5,14)-- (-4,16);
\draw [line width=2.pt] (-5,2)-- (-3,1);
\draw [line width=0.2pt,color=gray] (-5,6)-- (-3,5);
\draw [line width=0.2pt,color=gray] (-5,10)-- (-3,9);
\draw [line width=2.pt] (-5,14)-- (-3,13);

\begin{scriptsize}
\draw [fill=white] (-11,3) circle (2.5pt);
\draw [fill=white] (-9,2) circle (2.5pt);
\draw [fill=white] (-10,0) circle (2.5pt);
\draw [fill=white] (-7,2) circle (2.5pt);
\draw [fill=white] (-5,2) circle (2.5pt);
\draw [fill=white] (-3,1) circle (2.5pt);
\draw [fill=white] (-4,4) circle (2.5pt);
\draw [fill=white] (-10,4) circle (2.5pt);
\draw [fill=white] (-11,7) circle (2.5pt);
\draw [fill=white] (-9,6) circle (2.5pt);
\draw [fill=white] (-7,6) circle (2.5pt);
\draw [fill=white] (-5,6) circle (2.5pt);
\draw [fill=white] (-4,8) circle (2.5pt);
\draw [fill=white] (-3,5) circle (2.5pt);
\draw [fill=white] (-8,4) circle (2.5pt);
\draw [fill=white] (-11,11) circle (2.5pt);
\draw [fill=white] (-11,15) circle (2.5pt);
\draw [fill=white] (-9,10) circle (2.5pt);
\draw [fill=white] (-9,14) circle (2.5pt);
\draw [fill=white] (-7,10) circle (2.5pt);
\draw [fill=white] (-7,14) circle (2.5pt);
\draw [fill=white] (-10,8) circle (2.5pt);
\draw [fill=white] (-10,12) circle (2.5pt);
\draw [fill=white] (-8,8) circle (2.5pt);
\draw [fill=white] (-8,12) circle (2.5pt);
\draw [fill=white] (-5,10) circle (2.5pt);
\draw [fill=white] (-4,12) circle (2.5pt);
\draw [fill=white] (-3,9) circle (2.5pt);
\draw [fill=white] (-5,14) circle (2.5pt);
\draw [fill=white] (-4,16) circle (2.5pt);
\draw [fill=white] (-3,13) circle (2.5pt);
\draw [fill=white] (-8,0) circle (2.5pt);
\end{scriptsize}

\draw (-9.2,8.2) node[anchor=north west] {$3_2$};
\draw (-7.2,8.2) node[anchor=north west] {$3_3$};

\end{tikzpicture}
\\(b) Edges which need to be chosen
\end{minipage}
\\
\begin{minipage}{0.49\textwidth}
\begin{tikzpicture}[line cap=round,line join=round,>=triangle 45,x=0.54cm,y=0.27cm]

\clip(-12,-1) rectangle (-1,17);
\draw [line width=2.pt] (-11,3)-- (-11,7);
\draw [line width=2.pt] (-11,7)-- (-11,11);
\draw [line width=2.pt] (-11,11)-- (-11,15);
\draw [line width=0.2pt,color=gray] (-9,2)-- (-9,6);
\draw [line width=0.2pt,color=gray] (-9,6)-- (-9,10);
\draw [line width=0.2pt,color=gray] (-9,10)-- (-9,14);
\draw [line width=0.2pt,color=gray] (-7,2)-- (-7,6);
\draw [line width=0.2pt,color=gray] (-7,6)-- (-7,10);
\draw [line width=2.pt] (-7,10)-- (-7,14);
\draw [line width=0.2pt,color=gray] (-5,2)-- (-5,6);
\draw [line width=2.pt] (-5,6)-- (-5,10);
\draw [line width=0.2pt,color=gray] (-5,10)-- (-5,14);
\draw [line width=2.pt] (-10,0)-- (-10,4);
\draw [line width=0.2pt,color=gray] (-10,4)-- (-10,8);
\draw [line width=2.pt] (-10,8)-- (-10,12);
\draw [line width=2.pt] (-8,0)-- (-8,4);
\draw [line width=2.pt] (-8,4)-- (-8,8);
\draw [line width=2.pt] (-8,8)-- (-8,12);
\draw [line width=3.pt] (-3,1)-- (-3,5);
\draw [line width=0.2pt,color=gray] (-3,5)-- (-3,9);
\draw [line width=2.pt] (-3,9)-- (-3,13);
\draw [line width=2.pt] (-4,4)-- (-4,8);
\draw [line width=2.pt] (-4,8)-- (-4,12);
\draw [line width=2.pt] (-4,12)-- (-4,16);
\draw [line width=2.pt] (-11,3)-- (-9,2);
\draw [line width=0.2pt,color=gray] (-11,7)-- (-9,6);
\draw [line width=0.2pt,color=gray] (-11,11)-- (-9,10);
\draw [line width=2.pt] (-11,15)-- (-9,14);
\draw [line width=2.pt] (-10,0)-- (-9,2);
\draw [line width=2.pt] (-10,4)-- (-9,6);
\draw [line width=2.pt] (-10,8)-- (-9,10);
\draw [line width=2.pt] (-10,12)-- (-9,14);
\draw [line width=2.pt] (-8,0)-- (-7,2);
\draw [line width=0.2pt,color=gray] (-8,4)-- (-7,6);
\draw [line width=0.2pt,color=gray] (-8,8)-- (-7,10);
\draw [line width=2.pt] (-8,12)-- (-7,14);
\draw [line width=0.2pt,color=gray] (-9,2)-- (-7,2);
\draw [line width=2.pt] (-9,6)-- (-7,6);
\draw [line width=2.pt] (-9,10)-- (-7,10);
\draw [line width=0.2pt,color=gray] (-9,14)-- (-7,14);
\draw [line width=2.pt] (-5,2)-- (-7,2);
\draw [line width=2.pt] (-5,6)-- (-7,6);
\draw [line width=0.2pt,color=gray] (-5,10)-- (-7,10);
\draw [line width=0.2pt,color=gray] (-5,14)-- (-7,14);
\draw [line width=2.pt] (-5,2)-- (-4,4);
\draw [line width=0.2pt,color=gray] (-5,6)-- (-4,8);
\draw [line width=0.2pt,color=gray] (-5,10)-- (-4,12);
\draw [line width=2.pt] (-5,14)-- (-4,16);
\draw [line width=0.2pt,color=gray] (-5,2)-- (-3,1);
\draw [line width=0.2pt,color=gray] (-5,6)-- (-3,5);
\draw [line width=2.pt] (-5,10)-- (-3,9);
\draw [line width=2.pt] (-5,14)-- (-3,13);

\begin{scriptsize}
\draw [fill=white] (-11,3) circle (2.5pt);
\draw [fill=white] (-9,2) circle (2.5pt);
\draw [fill=white] (-10,0) circle (2.5pt);
\draw [fill=white] (-7,2) circle (2.5pt);
\draw [fill=white] (-5,2) circle (2.5pt);
\draw [fill=black] (-3,1) circle (2.5pt);
\draw [fill=white] (-4,4) circle (2.5pt);
\draw [fill=white] (-10,4) circle (2.5pt);
\draw [fill=white] (-11,7) circle (2.5pt);
\draw [fill=white] (-9,6) circle (2.5pt);
\draw [fill=white] (-7,6) circle (2.5pt);
\draw [fill=white] (-5,6) circle (2.5pt);
\draw [fill=white] (-4,8) circle (2.5pt);
\draw [fill=black] (-3,5) circle (2.5pt);
\draw [fill=white] (-8,4) circle (2.5pt);
\draw [fill=white] (-11,11) circle (2.5pt);
\draw [fill=white] (-11,15) circle (2.5pt);
\draw [fill=white] (-9,10) circle (2.5pt);
\draw [fill=white] (-9,14) circle (2.5pt);
\draw [fill=white] (-7,10) circle (2.5pt);
\draw [fill=white] (-7,14) circle (2.5pt);
\draw [fill=white] (-10,8) circle (2.5pt);
\draw [fill=white] (-10,12) circle (2.5pt);
\draw [fill=white] (-8,8) circle (2.5pt);
\draw [fill=white] (-8,12) circle (2.5pt);
\draw [fill=white] (-5,10) circle (2.5pt);
\draw [fill=white] (-4,12) circle (2.5pt);
\draw [fill=white] (-3,9) circle (2.5pt);
\draw [fill=white] (-5,14) circle (2.5pt);
\draw [fill=white] (-4,16) circle (2.5pt);
\draw [fill=white] (-3,13) circle (2.5pt);
\draw [fill=white] (-8,0) circle (2.5pt);
\end{scriptsize}

\draw (-3,1) node[anchor=north west] {$4_5$};
\draw (-3,5) node[anchor=north west] {$3_5$};
\draw (-3,9) node[anchor=north west] {$2_5$};
\draw (-5.75,6) node[anchor=north west] {$3_4$};
\draw (-5.75,2) node[anchor=north west] {$4_4$};
\end{tikzpicture}
\\(c) A cycle $C$ and the edge $3_54_5$
\end{minipage}
\begin{minipage}{0.49\textwidth}
\begin{tikzpicture}[line cap=round,line join=round,>=triangle 45,x=0.54cm,y=0.27cm]

\clip(-12,-1) rectangle (-1,17);
\draw [line width=2.pt] (-11,3)-- (-11,7);
\draw [line width=0.2pt,color=gray] (-11,7)-- (-11,11);
\draw [line width=2.pt] (-11,11)-- (-11,15);
\draw [line width=0.2pt,color=gray] (-9,2)-- (-9,6);
\draw [line width=0.2pt,color=gray] (-9,6)-- (-9,10);
\draw [line width=0.2pt,color=gray] (-9,10)-- (-9,14);
\draw [line width=2.pt] (-7,2)-- (-7,6);
\draw [line width=0.2pt,color=gray] (-7,6)-- (-7,10);
\draw [line width=0.2pt,color=gray] (-7,10)-- (-7,14);
\draw [line width=2.pt] (-5,10)-- (-5,14);
\draw [line width=2.pt] (-10,0)-- (-10,4);
\draw [line width=2.pt] (-10,4)-- (-10,8);
\draw [line width=2.pt] (-10,8)-- (-10,12);
\draw [line width=2.pt] (-8,0)-- (-8,4);
\draw [line width=2.pt] (-8,4)-- (-8,8);
\draw [line width=2.pt] (-8,8)-- (-8,12);
\draw [line width=2.pt] (-4,4)-- (-4,8);
\draw [line width=2.pt] (-4,8)-- (-4,12);
\draw [line width=2.pt] (-4,12)-- (-4,16);
\draw [line width=2.pt] (-11,3)-- (-9,2);
\draw [line width=2.pt] (-11,7)-- (-9,6);
\draw [line width=2.pt] (-11,11)-- (-9,10);
\draw [line width=2.pt] (-11,15)-- (-9,14);
\draw [line width=2.pt] (-10,0)-- (-9,2);
\draw [line width=0.2pt,color=gray] (-10,4)-- (-9,6);
\draw [line width=0.2pt,color=gray] (-10,8)-- (-9,10);
\draw [line width=2.pt] (-10,12)-- (-9,14);
\draw [line width=2.pt] (-8,0)-- (-7,2);
\draw [line width=0.2pt,color=gray] (-8,4)-- (-7,6);
\draw [line width=0.2pt,color=gray] (-8,8)-- (-7,10);
\draw [line width=2.pt] (-8,12)-- (-7,14);
\draw [line width=0.2pt,color=gray] (-9,2)-- (-7,2);
\draw [line width=2.pt] (-9,6)-- (-7,6);
\draw [line width=2.pt] (-9,10)-- (-7,10);
\draw [line width=0.2pt,color=gray] (-9,14)-- (-7,14);
\draw [line width=2.pt] (-5,10)-- (-7,10);
\draw [line width=0.2pt,color=gray] (-5,14)-- (-7,14);
\draw [line width=0.2pt,color=gray] (-5,10)-- (-4,12);
\draw [line width=2.pt] (-5,14)-- (-4,16);
\draw [line width=2.pt] (-5,14)-- (-3,13);

\begin{scriptsize}
\draw [fill=white] (-11,3) circle (2.5pt);
\draw [fill=white] (-9,2) circle (2.5pt);
\draw [fill=white] (-10,0) circle (2.5pt);
\draw [fill=white] (-7,2) circle (2.5pt);
\draw [fill=white] (-4,4) circle (2.5pt);
\draw [fill=white] (-10,4) circle (2.5pt);
\draw [fill=white] (-11,7) circle (2.5pt);
\draw [fill=white] (-9,6) circle (2.5pt);
\draw [fill=white] (-7,6) circle (2.5pt);
\draw [fill=white] (-4,8) circle (2.5pt);
\draw [fill=white] (-8,4) circle (2.5pt);
\draw [fill=white] (-11,11) circle (2.5pt);
\draw [fill=white] (-11,15) circle (2.5pt);
\draw [fill=white] (-9,10) circle (2.5pt);
\draw [fill=white] (-9,14) circle (2.5pt);
\draw [fill=white] (-7,10) circle (2.5pt);
\draw [fill=white] (-7,14) circle (2.5pt);
\draw [fill=white] (-10,8) circle (2.5pt);
\draw [fill=white] (-10,12) circle (2.5pt);
\draw [fill=white] (-8,8) circle (2.5pt);
\draw [fill=white] (-8,12) circle (2.5pt);
\draw [fill=white] (-5,10) circle (2.5pt);
\draw [fill=white] (-4,12) circle (2.5pt);
\draw [fill=white] (-5,14) circle (2.5pt);
\draw [fill=white] (-4,16) circle (2.5pt);
\draw [fill=white] (-3,13) circle (2.5pt);
\draw [fill=white] (-8,0) circle (2.5pt);
\end{scriptsize}
\end{tikzpicture}
\\(d) Graph $G_1$ and its spanning tree
\end{minipage}
\bigskip

{\bf Figure 3. } The graph $P_4\square T_1$ and its subgraphs
\end{center}
\bigskip

\section{Concluding remarks}\label{concluding}

For $G_1$ traceable, $G_2$ containing a path factor, $|V(G_1)|$ even and $|V(G_1)|\geq 4\Delta(G_2)-2$, we construct a Hamiltonian cycle for $G_1\square G_2$ in Theorem~\ref{main}(b). On the other hand, Proposition~\ref{t1} shows that the above assumption $|V(G_1)|\geq 4\Delta(G_2)-2$ can not be extended to $|V(G_1)|\geq \Delta(G_2).$
In general, we further conjecture that the assumption $|V(G_1)|\geq 4\Delta(G_2)-4$ is not sufficient for $G_1\square G_2$ to be Hamiltonian.

\begin{conj}
For $k\geq 3$, there is a connected graph $G$ with a path factor such that $\Delta(G)=k$ and $P_{4k-4}\square G$ is not Hamiltonian.
\end{conj}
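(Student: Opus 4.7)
The plan is constructive: for each $k\geq 3$ I aim to exhibit a graph $G_k$ with a path factor, $\Delta(G_k)=k$, and $P_{4k-4}\square G_k$ not Hamiltonian. Note that Proposition~\ref{t1} handles only $k=3$ with $n=\Delta(T_1)+1=4$, whereas the conjecture demands $n=4k-4$, so even at $k=3$ a new graph is required; and when $n$ is this large, the many interior columns of $P_n\square G_k$ provide abundant routing freedom for a prospective Hamiltonian cycle, so the chosen $G_k$ must encode an obstruction that scales with $k$.

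First I would revisit the proof of Proposition~\ref{t1} to extract the mechanism. There, the degree-two vertices of $P_4\square T_1$ (the leaf-copies in columns $1$ and $4$) force their two incident edges into any Hamiltonian cycle, and propagation of these forces through the internal degree-three vertices of $T_1$ eventually produces a topological conflict at the edge $3_23_3$. This suggests two natural lines of attack. The \emph{forced-edge} strategy generalizes Proposition~\ref{t1}: design $G_k$ so that the leaf-copies in columns $1$ and $4k-4$ initiate a cascade of forced edges which eventually collide at an interior vertex. The \emph{toughness} strategy instead seeks a cut set $S\subseteq V(P_{4k-4}\square G_k)$ with $c(P_{4k-4}\square G_k-S)>|S|$; because $G_k$ has a path factor, Theorem~\ref{2.4} does not immediately apply, so $S$ must separate $P_{4k-4}\square G_k$ into non-isolated components, presumably built from entire columns over a carefully chosen vertex cut of $G_k$ augmented by extra vertices to sever the resulting slabs.

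As a concrete candidate I would try a caterpillar tree $G_k$ with spine $v_1,v_2,\ldots,v_m$ and $k-2$ pendant leaves attached at each internal spine vertex, tuning $m$ so that the path-factor condition of Proposition~\ref{p3.2} is preserved (this restricts $m$ from above in terms of $k$, since $i(G_k-\{v_2,\ldots,v_{m-1}\})=2+(m-2)(k-2)$ must be at most $2(m-2)$). For small $k$ I would first verify by hand or by computer search whether some caterpillar of this kind, or a small modification of it, defeats Hamiltonicity of $P_{4k-4}\square G_k$, and then try to distill an inductive pattern that works uniformly in $k$.

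The main obstacle I anticipate is the non-Hamiltonicity proof itself. The forced-edge analysis of Proposition~\ref{t1} succeeded precisely because $n=4$ is small; with $n=4k-4$, each individual force constrains only $O(1)$ edges while the graph has $\Theta(n|V(G_k)|)$ edges in total, so the cascade of forced edges is long and delicate, and any contradiction may well need to be extracted via a parity or cocycle argument along the $P_n$-direction rather than by direct case analysis. I expect that calibrating the gadget so that (i) a path factor is retained, (ii) forcings survive all the way to the interior, and (iii) those forcings genuinely conflict, is the heart of the difficulty and the reason the author left this as a conjecture.
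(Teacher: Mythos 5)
The statement you were asked to prove is labelled a \emph{conjecture} in the paper: the authors offer no proof of it, only the single data point of Proposition~\ref{t1} (which concerns $P_4\square T_1$, not $P_{4k-4}\square G$). So there is no argument of theirs to compare yours against; the only question is whether your submission closes the open problem, and it does not. What you have written is a research plan, not a proof: no graph $G_k$ is actually exhibited for any $k$, no cut set is produced, no forced-edge cascade is carried out, and no contradiction is derived. Every step that would constitute the mathematical content --- choosing the gadget, verifying the path factor and $\Delta(G_k)=k$, and above all proving that $P_{4k-4}\square G_k$ is not Hamiltonian --- is deferred to future ``hand or computer search,'' and your final paragraph concedes that the heart of the difficulty is untouched. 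That is the gap, and it is the whole statement.

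Beyond the global incompleteness, the one concrete candidate you float is already ruled out by your own inequality. For the caterpillar with spine $v_1,\ldots,v_m$ and $k-2$ pendant leaves at each internal spine vertex, taking $S=\{v_2,\ldots,v_{m-1}\}$ in Proposition~\ref{p3.2} gives $i(G_k-S)=2+(m-2)(k-2)$, and the path-factor requirement $2+(m-2)(k-2)\le 2(m-2)$ rearranges to $(m-2)(k-4)\le -2$. This is satisfiable only for $k=3$ (where it forces $m\ge 4$) and is false for every $k\ge 4$, so for $k\ge 4$ no caterpillar of this shape has a path factor at all; the family cannot even enter the competition, and the promised ``small modification'' is doing all the work. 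If you pursue this problem, note also that the interval $4\Delta(G)-4\le n<4\Delta(G)-2$ is precisely where Theorem~\ref{pfactor} just barely fails (the slack $\lceil(n-4)/4\rceil-\Delta(T)+1$ in its induction drops to $0$), so a successful construction will likely need to exploit that single missing unit of slack at a maximum-degree vertex rather than a long-range parity obstruction.
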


If a bipartite graph has an equal size bipartition, we call it {\it balanced}. For $n$ being an odd integer and $G$ a bipartite graph, the graph $P_n\square G$ is also possible to be Hamiltonian if it is balanced. For instance, let $V(G_2)=\{1,2,3,4,5,6\}$ and $E(G_2)=\{12,23,34,25,36\}$. The graph $P_5\square G_2$ is Hamiltonian as depicted in Figure~4.
\bigskip

\begin{center}
\begin{tikzpicture}[line cap=round,line join=round,>=triangle 45,x=1cm,y=0.35cm]

\clip(-11,-1) rectangle (-1,21);

\draw [line width=0.2pt,color=gray] (-7,2)-- (-7,6);
\draw [line width=0.2pt,color=gray] (-7,6)-- (-7,10);
\draw [line width=0.2pt,color=gray] (-7,10)-- (-7,14);
\draw [line width=0.2pt,color=gray] (-7,14)-- (-7,18);
\draw [line width=0.2pt,color=gray] (-5,2)-- (-5,6);
\draw [line width=0.2pt,color=gray] (-5,6)-- (-5,10);
\draw [line width=0.2pt,color=gray] (-5,10)-- (-5,14);
\draw [line width=0.2pt,color=gray] (-5,14)-- (-5,18);
\draw [line width=2.pt](-8,0)-- (-8,4);
\draw [line width=2.pt] (-8,4)-- (-8,8);
\draw [line width=0.2pt,color=gray] (-8,8)-- (-8,12);
\draw [line width=2.pt] (-8,12)-- (-8,16);
\draw [line width=2.pt] (-3,1)-- (-3,5);
\draw [line width=2.pt] (-3,5)-- (-3,9);
\draw [line width=0.2pt,color=gray] (-3,9)-- (-3,13);
\draw [line width=2.pt] (-3,13)-- (-3,17);
\draw [line width=2.pt] (-4,4)-- (-4,8);
\draw [line width=0.2pt,color=gray] (-4,8)-- (-4,12);
\draw [line width=2.pt] (-4,12)-- (-4,16);
\draw [line width=2.pt] (-4,16)-- (-4,20);
\draw [line width=2.pt] (-8,0)-- (-7,2);
\draw [line width=0.2pt,color=gray] (-8,4)-- (-7,6);
\draw [line width=2.pt] (-8,8)-- (-7,10);
\draw [line width=2.pt] (-8,12)-- (-7,14);
\draw [line width=2.pt] (-8,16)-- (-7,18);
\draw [line width=0.2pt,color=gray] (-5,2)-- (-7,2);
\draw [line width=2.pt] (-5,6)-- (-7,6);
\draw [line width=0.2pt,color=gray] (-5,10)-- (-7,10);
\draw [line width=2.pt] (-5,14)-- (-7,14);
\draw [line width=0.2pt,color=gray] (-5,18)-- (-7,18);
\draw [line width=2.pt] (-5,2)-- (-4,4);
\draw [line width=2.pt] (-5,6)-- (-4,8);
\draw [line width=2.pt] (-5,10)-- (-4,12);
\draw [line width=0.2pt,color=gray] (-5,14)-- (-4,16);
\draw [line width=2.pt] (-5,18)-- (-4,20);
\draw [line width=2.pt] (-5,2)-- (-3,1);
\draw [line width=0.2pt,color=gray] (-5,6)-- (-3,5);
\draw [line width=2.pt] (-5,10)-- (-3,9);
\draw [line width=2.pt] (-5,14)-- (-3,13);
\draw [line width=2.pt] (-5,18)-- (-3,17);

\draw [line width=2.pt] (-7,2)-- (-9,3);
\draw [line width=2.pt] (-7,6)-- (-9,7);
\draw [line width=2.pt] (-7,10)-- (-9,11);
\draw [line width=0.2pt,color=gray] (-7,14)-- (-9,15);
\draw [line width=2.pt] (-7,18)-- (-9,19);
\draw [line width=2.pt] (-9,3)-- (-9,7);
\draw [line width=0.2pt,color=gray] (-9,7)-- (-9,11);
\draw [line width=2.pt] (-9,11)-- (-9,15);
\draw [line width=2.pt] (-9,15)-- (-9,19);

\begin{scriptsize}
\draw [fill=white] (-7,2) circle (2.5pt);
\draw [fill=white] (-5,2) circle (2.5pt);
\draw [fill=white] (-3,1) circle (2.5pt);
\draw [fill=white] (-4,4) circle (2.5pt);
\draw [fill=white] (-7,6) circle (2.5pt);
\draw [fill=white] (-5,6) circle (2.5pt);
\draw [fill=white] (-4,8) circle (2.5pt);
\draw [fill=white] (-3,5) circle (2.5pt);
\draw [fill=white] (-8,4) circle (2.5pt);
\draw [fill=white] (-7,10) circle (2.5pt);
\draw [fill=white] (-7,14) circle (2.5pt);
\draw [fill=white] (-8,8) circle (2.5pt);
\draw [fill=white] (-8,12) circle (2.5pt);
\draw [fill=white] (-5,10) circle (2.5pt);
\draw [fill=white] (-4,12) circle (2.5pt);
\draw [fill=white] (-3,9) circle (2.5pt);
\draw [fill=white] (-5,14) circle (2.5pt);
\draw [fill=white] (-4,16) circle (2.5pt);
\draw [fill=white] (-3,13) circle (2.5pt);
\draw [fill=white] (-8,0) circle (2.5pt);
\draw [fill=white] (-9,19) circle (2.5pt);
\draw [fill=white] (-9,15) circle (2.5pt);
\draw [fill=white] (-9,11) circle (2.5pt);
\draw [fill=white] (-9,7) circle (2.5pt);
\draw [fill=white] (-9,3) circle (2.5pt);
\draw [fill=white] (-7,18) circle (2.5pt);
\draw [fill=white] (-5,18) circle (2.5pt);
\draw [fill=white] (-3,17) circle (2.5pt);
\draw [fill=white] (-4,20) circle (2.5pt);
\draw [fill=white] (-8,16) circle (2.5pt);
\end{scriptsize}
\end{tikzpicture}
\bigskip

{\bf Figure 4. } A Hamiltonian cycle of $P_5\square G_2$
\end{center}
\bigskip

If $P_5\square G$ is Hamiltonian for any $G$ such that $G$ contains a path factor and $P_5\square G$ is balanced bipartite, then maybe it is possible to construct a Hamiltonian cycle for $P_{2k+5}\square G$ by combining a Hamiltonian cycle of $P_5\square G$ and a Hamiltonian cycle of $P_{2k}\square G$. Hence we give another conjecture.

\begin{conj}
Let $G$ be a graph with a path factor and $n\geq 4\Delta(G)-2$. If $P_n\square G$ is balanced bipartite, then $P_n\square G$ is Hamiltonian.
\end{conj}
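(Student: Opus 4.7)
The approach is induction on $|V(T)|$, mirroring the proof of Theorem~\ref{pn2} but tracking a more refined edge count to accommodate the richer set of types $\{B,L,C,R\}$. The base cases are $T=P_2$ and $T=P_3$. For $T=P_2$ every vertex has type $B$ and the standard Hamiltonian cycle of $P_n\square P_2$ uses all $n-1$ edges of $B_v$ for each $v$, matching $|B_v|-\deg_T(v)+\delta_B=(n-1)-1+1$. For $T=P_3$ the standard cycle described before Figure~2 was expressly designed so that above a vertex of type $X\in\{L,C,R\}$ the cycle uses exactly the edges of $X_v$; a direct count then confirms the required equality.

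For the inductive step assume $|V(T)|\geq 4$ and apply Lemma~\ref{3.1} to extract a component $c$ of $F$ such that $T-c$ is a tree carrying the $\{P_2,P_3\}$-factor $F-\{c\}$. Let $u_1\in c$ and $u_2\in V(T-c)$ be adjacent in $T$, and let $X,Y$ denote the respective types of $u_1,u_2$. Take $H'$ to be the standard Hamiltonian cycle of $P_n\square c$ and, by the inductive hypothesis applied to $T-c$, obtain a Hamiltonian cycle $H''$ of $P_n\square (T-c)$ satisfying $H''\cap B_{u_2}\subseteq Y_{u_2}$ with $|H''\cap B_{u_2}|=|Y_{u_2}|-(\deg_T(u_2)-1)+\delta_Y$. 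The plan is to splice $H'$ and $H''$ by selecting a single rung index $j$ for which $j_{u_1}(j+1)_{u_1}\in X_{u_1}$ lies in $H'$ and $j_{u_2}(j+1)_{u_2}$ lies in $H''\cap X_{u_2}$; deleting these two parallel rungs and inserting the two crossing edges $j_{u_1}j_{u_2}$ and $(j+1)_{u_1}(j+1)_{u_2}$ fuses the cycles into a Hamiltonian cycle $H$ of $P_n\square T$.

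The decisive step is guaranteeing such an index $j$, i.e.\ showing $H''\cap B_{u_2}\cap X_{u_2}\neq\emptyset$. Since $H''\cap B_{u_2}\subseteq Y_{u_2}$, inclusion--exclusion inside $Y_{u_2}$ yields
$$|H''\cap B_{u_2}\cap X_{u_2}|\geq |Y_{u_2}\cap X_{u_2}|-(\deg_T(u_2)-1)+\delta_Y.$$
Invoking Lemma~\ref{lem4.1} to bound $|Y_{u_2}\cap X_{u_2}|\geq\lceil(n-4)/4\rceil$ and using $n\geq 4\Delta(T)-2$, $\deg_T(u_2)\leq\Delta(T)$, $\delta_Y\geq 1$, this lower bound simplifies to a positive integer, so a valid $j$ exists. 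This numerical check — the only place where the quantitative hypothesis $n\geq 4\Delta(T)-2$ is actually used, and which depends on Lemma~\ref{lem4.1}'s uniform lower bound across all pairs of types $X,Y$ — is the main obstacle.

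After the splice, one only needs to re-examine the edge counts at $u_1$ and $u_2$, as every other vertex of $T$ is untouched. At $u_1$ exactly one edge of $X_{u_1}$ is removed from the full set $H'\cap X_{u_1}$, and since $\deg_T(u_1)=\delta_X+1$ this gives $|H\cap X_{u_1}|=|X_{u_1}|-1=|X_{u_1}|-\deg_T(u_1)+\delta_X$. At $u_2$ the identity $\deg_{T-c}(u_2)=\deg_T(u_2)-1$ absorbs the single deletion, yielding $|H\cap Y_{u_2}|=|Y_{u_2}|-\deg_T(u_2)+\delta_Y$. This completes the induction. Theorem~\ref{main}(b) then follows by taking $T$ to be a spanning tree of $G_2$ containing a $\{P_2,P_3\}$-factor supplied by Lemma~\ref{p3.1}, and noting that a Hamiltonian cycle of $P_n\square T$ is a Hamiltonian cycle of $P_n\square G_2$.
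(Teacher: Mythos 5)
This statement is Conjecture 6.2 of the paper; the authors do not prove it, they explicitly leave it open. Your proposal does not prove it either: what you have written is, almost verbatim, the paper's proof of Theorem~\ref{pfactor} (equivalently Theorem~\ref{main}(b)), which establishes the conclusion only when $n$ is an \emph{even} integer. For even $n$ the product $P_n\square G$ with $G$ bipartite is automatically balanced, so the even case of the conjecture is already Theorem~\ref{main}(b); the entire new content of the conjecture is the case of odd $n$ with $G$ a balanced bipartite graph, and your argument collapses there. Concretely: Lemma~\ref{lem4.1}, which you invoke for the crucial lower bound $|Y_{u_2}\cap X_{u_2}|\geq\lceil (n-4)/4\rceil$, is stated and proved only for even $n$; worse, the standard Hamiltonian cycle of $P_n\square P_3$ simply does not exist when $n$ is odd, because $P_n\square P_3$ is then a bipartite graph with partite sets of sizes $(3n+1)/2$ and $(3n-1)/2$ and hence is not Hamiltonian. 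Since both the base case $T=P_3$ and the splicing step (which grafts a standard cycle of $P_n\square c$ with $c\cong P_3$ onto $H''$) rely on that cycle, the induction cannot even start in the odd case.

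A telling symptom is that your argument never uses the hypothesis that $P_n\square G$ is balanced bipartite. Without that hypothesis the statement is false for odd $n$ (e.g.\ $P_7\square P_3$ satisfies $n\geq 4\Delta-2$ and $P_3$ has a path factor, yet the product is an unbalanced bipartite graph and so not Hamiltonian), so any correct proof must invoke balancedness in an essential way. The paper's own suggestion for attacking the odd case is quite different in spirit: first handle $P_5\square G$ for balanced bipartite $G$ with a path factor (as in their Figure~4 example), then try to concatenate a Hamiltonian cycle of $P_5\square G$ with one of $P_{2k}\square G$ to cover $P_{2k+5}\square G$. Your write-up would serve as a proof of Theorem~\ref{main}(b), but it leaves the actual conjecture untouched.
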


\section*{Acknowledgments}

This research is supported by the Ministry of Science and Technology
of Taiwan under the project MOST 107-2115-M-009-009-MY2.

\end{document}